\newcommand{\R}{\varmathbb{R}}
\newcommand{\Rn}{{\varmathbb{R}^n}}
\newcommand{\Ha}{\mathcal{H}}
\newcommand{\C}{\mathcal{C}}
\newcommand{\ve}{\varepsilon}
\def\diam{\qopname\relax o{diam}}
\def\loc{\qopname\relax o{loc}}
\def\dist{\qopname\relax o{dist}}
\def\diam{\qopname\relax o{diam}}
\def\phi{\varphi}
\let\oldmarginpar\marginpar
\renewcommand\marginpar[1]{\-\oldmarginpar[\raggedleft\footnotesize #1]%
{\raggedright\footnotesize #1}}
\theoremstyle{plain}
\newtheorem{theorem}[equation]{Theorem}
\newtheorem{lemma}[equation]{Lemma}
\newtheorem{proposition}[equation]{Proposition}
\newtheorem{corollary}[equation]{Corollary}
\theoremstyle{definition}
\newtheorem{definition}[equation]{Definition}
\newtheorem{example}[equation]{Example}
\theoremstyle{remark}
\newtheorem{remark}[equation]{Remark}
\numberwithin{equation}{section}
\title[On Choquet integrals  and Poincar\'e-Sobolev  inequalities]{On Choquet integrals and  Poincar\'e-Sobolev  inequalities }
\author{Petteri Harjulehto}
\address[Petteri Harjulehto]{Department of Mathematics and Statistics,
FI-00014 University of Helsinki, Finland}
\email{petteri.harjulehto@helsinki.fi}
\author{Ritva Hurri-Syrj\"anen}
\address[Ritva Hurri-Syrj\"anen]{Department of Mathematics and Statistics,
FI-00014 University of Helsinki, Finland}
\email{ritva.hurri-syrjanen@helsinki.fi}
\date{\today}
\begin{document}

\keywords{Hausdorff content, Hausdorff capacity, Choquet integral, non-smooth domain,
Poincar\'e inequality, Poincar\'e-Sobolev inequality, Riesz potential}
\subjclass[2020]{46E35 (31C15, 	42B20, 26D10)}

\begin{abstract} 
We consider integral inequalities  in the sense of Choquet with respect to  the Hausdorff content $\Ha_\infty^{\delta}$.
In particular, 
if $\Omega$ is a bounded John domain in $\R^n$, $n\geq 2$,  and  $0 <\delta \le n$, we prove that the corresponding
$(\delta p/(\delta -p),p)$-Poincar\'e-Sobolev inequalities hold for all continuously differentiable functions defined on $\Omega$
whenever $\delta /n < p < \delta$.
We prove also that the $(p,p)$-Poincar\'e inequality is valid for all $p>\delta /n$.

 \end{abstract}

\maketitle

%%%%%%%%%%%%%%%%%%%%%%%%%%%%%%%%%%%%%%%%%%%%%%%%%%%%%%%
\section{Introduction}
We are working on Euclidean $n$-space $\Rn$, $n\geq 2$.
 We recall the definition of Choquet integrals over sets $E$ in $\Rn$  with respect to the Hausdorff content $\Ha_\infty^{\delta}$
and consider corresponding integral inequalities.
In particular, we are interested in  the Poincar\'e and Poincar\'e-Sobolev inequalities
in this context.

Our main theorem, Theorem~\ref{thm:main-fractional} gives the following corollary.
\begin{corollary}\label{cor:SP}
Let $\Omega$ be a bounded  $(\alpha, \beta)$-John domain in $\Rn$.
If $0<\delta \le n$ and $p \in (\delta/n, \delta)$, then there exists a constant $c$ depending only on $n$, $\delta$, $p$, and John constants $\alpha$ and $\beta$ such that  
\begin{equation}\label{MainIneq}
\inf_{b \in \R}\Big(\int_\Omega |u(x) -b|^{\frac{\delta p}{\delta -p}} d \Ha^{\delta}_\infty \Big)^{\frac{\delta -p}{\delta p}}
\le c    \Big(\int_\Omega |\nabla u(x)|^p d \Ha^{\delta}_\infty\Big)^{\frac{1}{p}}
\end{equation}
for all $u \in C^1(\Omega)$.
\end{corollary}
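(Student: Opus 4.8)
The plan is to derive \eqref{MainIneq} from Theorem~\ref{thm:main-fractional} along the classical route for Poincar\'e--Sobolev inequalities on John domains: a pointwise estimate of $u$ by a Riesz potential of $|\nabla u|$, followed by a mapping property of that potential between the Choquet spaces over $\Ha^\delta_\infty$. Throughout, write $I_1 g(x) := \int_{\Rn} |x-y|^{1-n}\, g(y)\, dy$. Fix the John centre $x_0$ of $\Omega$ and the associated distinguished ball $B_0 \subset \Omega$, and take the competitor $b := \vint_{B_0} u(y)\, dy$; since $b$ is admissible for the infimum on the left of \eqref{MainIneq}, it suffices to bound $\big(\int_\Omega |u - b|^{\delta p/(\delta - p)}\, d\Ha^\delta_\infty\big)^{(\delta - p)/(\delta p)}$.

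First I would prove the pointwise estimate
\begin{equation*}
|u(x) - b| \,\le\, c(n,\alpha,\beta)\, I_1\big(|\nabla u|\,\chi_\Omega\big)(x), \qquad x \in \Omega,
\end{equation*}
for $u \in C^1(\Omega)$. Join $x$ to $x_0$ by a John curve and build a chain of balls $B_0, B_1, B_2, \dots \subset \Omega$ centred along it, with $B_0$ as above, with consecutive balls overlapping, with radii $r_i \to 0$ as $B_i$ approaches $x$ and $\dist(B_i, x) \le c\, r_i$, and with bounded overlap $\sum_i \chi_{B_i} \le c$. Telescoping from $B_0$ to $x$ and applying the Poincar\'e inequality on each ball gives $|u_{B_i} - u_{B_{i+1}}| \le c\, r_i \vint_{B_i} |\nabla u|$, while $u_{B_i} \to u(x)$; since $|x-y| \le c\, r_i$ for $y \in B_i$ one has $r_i^{1-n}\chi_{B_i}(y) \le c\, |x-y|^{1-n}$, so summing over $i$ (using the bounded overlap) turns $\sum_i r_i \vint_{B_i}|\nabla u|$ into $c\, I_1(|\nabla u|\chi_\Omega)(x)$. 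This step is where the John constants $\alpha,\beta$ enter the final constant.

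Next I would apply Theorem~\ref{thm:main-fractional}; I would expect it to contain, as its first-order ($\nabla$) case, the Choquet--Riesz inequality
\begin{equation*}
\Big(\int_\Omega (I_1 g)^{\frac{\delta p}{\delta - p}}\, d\Ha^\delta_\infty\Big)^{\frac{\delta - p}{\delta p}} \,\le\, c(n,\delta,p)\, \Big(\int_\Omega g^p\, d\Ha^\delta_\infty\Big)^{\frac 1p}
\end{equation*}
for nonnegative $g \in C(\Omega)$ when $\delta/n < p < \delta$. Applying this with $g = |\nabla u|\,\chi_\Omega$ and combining with the previous step and the monotonicity of the Choquet integral yields \eqref{MainIneq}. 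Here the exponent $\delta p/(\delta - p)$ is the one forced by the scaling of $I_1$ against the $\delta$-dimensional content, the restriction $p < \delta$ keeps it finite, and $p > \delta/n$ is what the content estimates require.

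The hard part is the Choquet--Riesz inequality in the last display. Because $\Ha^\delta_\infty$ is only finitely subadditive there is no dual space and no Riesz representation to exploit, so the inequality has to be proved directly on the level sets of $I_1 g$: decompose $\{x : I_1 g(x) > \lambda\}$ by a Whitney/Calder\'on--Zygmund covering adapted to $\Ha^\delta_\infty$, bound the content of each piece by a single-ball Riesz estimate, and sum the resulting geometric series---that summation being exactly where $p > \delta/n$ is used. The pointwise step, by contrast, is routine chaining; the only care it needs is to check that the John curve yields the genuine (untruncated) potential $I_1$ and not a fractional-maximal truncation, and to keep track of the constant's dependence on $\alpha$ and $\beta$.
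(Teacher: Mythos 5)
The corollary you are asked to prove is, in the paper, literally the case $\kappa=0$ of Theorem~\ref{thm:main-fractional}: setting $\kappa=0$ there turns the exponent $\tfrac{p(\delta-\kappa p)}{\delta-p}$ into $\tfrac{\delta p}{\delta-p}$ and the content $\Ha^{\delta-\kappa p}_\infty$ into $\Ha^{\delta}_\infty$, and nothing else is needed. You instead guessed that Theorem~\ref{thm:main-fractional} is a mapping theorem for the Riesz potential $I_1$ between Choquet spaces and set out to rebuild the Poincar\'e--Sobolev inequality on top of it. Your architecture --- chaining along John curves to get $|u(x)-u_{B_0}|\le c(n,\alpha,\beta)\,I_1(|\nabla u|\chi_\Omega)(x)$, then a $(\delta p/(\delta-p),p)$ bound for $I_1$ with respect to $\Ha^\delta_\infty$ --- is exactly the architecture of the paper's proof of Theorem~\ref{thm:main-fractional} itself (the pointwise estimate is quoted from Reshetnyak/Bojarski/Martio/Hurri; your chain-of-balls sketch is a standard and acceptable substitute). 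So the plan is sound; the issue is entirely in how the Riesz-potential step is carried out.

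There the paper and you diverge, and your version has a real gap. The paper proves the Choquet--Riesz bound by Hedberg's pointwise trick (Lemma~\ref{lem:Choquet-Hedberg}): for $\kappa=0$ one gets $I_1 f(x)\le c\,(Mf(x))^{(\delta-p)/\delta}\bigl(\int|f|^p\,d\Ha^\delta_\infty\bigr)^{1/\delta}$, raises this to the power $\delta p/(\delta-p)$, integrates, and invokes the Adams--Orobitg--Verdera theorem that $M$ is bounded on $L^p(\Ha^\delta_\infty)$ for $p>\delta/n$ (Theorem~\ref{thm:M-bounded}). You instead propose to prove the strong-type bound ``directly on the level sets of $I_1g$'' by a Whitney/Calder\'on--Zygmund covering adapted to $\Ha^\delta_\infty$ and a geometric series. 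As written this is not a proof but a pointer to one, and it is pointed at the genuinely delicate spot: for $0<\delta\le n-1$ the content $\Ha^\delta_\infty$ is not a Choquet capacity, the Choquet integral is only quasi-subadditive (via the Yang--Yuan modified dyadic content), and any argument that sums contributions over a countable covering family must justify that summation --- this is precisely the point the paper spends a paragraph on before stating Theorem~\ref{thm:M-bounded}, and your sketch does not address it. To close the gap you should either quote Theorem~\ref{thm:main-fractional} with $\kappa=0$ directly, or replace your level-set sketch by the Hedberg estimate plus the maximal-function theorem, both of which are already available in the paper.
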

We will show that the exponent $\delta p/(\delta -p)$ is the best possible exponent in this setting,  Example \ref{AF}.
Theorem~\ref{thm:main-fractional}  states a version of the Poincar\'e-Sobolev inequality \eqref{maineq}  where the dimension of the Hausdorff content is smaller on the left hand side than on the right hand side
 of the corresponding inequality.

We prove also the Poincar\'e inequality for any 
$p>\delta /n$, 
$\delta\in (0,n]$,  
that is,
there is a constant $c$  which depends only on $\delta$, $n$, $p$, and John constants $\alpha$ and $\beta$ such that the inequality
\begin{equation*}
\inf_{b \in \R} \int_\Omega |u(x)-b|^p \, d \Ha^{\delta}_\infty
\le c  \int_\Omega |\nabla u(x)|^p \, d \Ha^{\delta}_\infty
\end{equation*}
holds for all $u \in C^1(\Omega)$
whenever $\Omega$ is a bounded  $(\alpha, \beta)$-John domain,
 Theorem \ref{thm:Poincare}.

We state and prove the corresponding  Poincar\'e- and Poincar\'e-Sobolev -type inequalities for continuously differentiable functions with compact support defined on open, connected sets in
 Theorem \ref{thm:Poincare_0}.

If $\delta =n$,
 our results recover the earlier well-known results, \cite{Bojarski1988}.
Although there is a wealth literature on Poincar\'e- and Poincar\'e-Sobolev -type inequalities
in various contexts, the authors of the present paper have not been able to find  previous results
where the integrals on the both sides of the corresponding inequalitites are in the sense of Choquet with respect to the Hausdorff content $\Ha_{\infty}^{\delta}$,  $0<\delta <n$.

We point out that
there are  Poincar\'e-type inequalities for $C^{\infty}_0(\Rn)$ functions
when only the left hand side  is the Choquet integral 
with respect to Hausdorff content and the right hand side is the usual Lebesgue integral.
We recall the following estimate which is a special case of  the recent result  \cite[Theorem 1.7]{PS} and   
is called the inequality of  D. R. Adams.
%\begin{theorem}\label{PS}
There exists a constant $c(n)$ such that
\begin{equation}\label{equ:Adams}
\int_{\Rn}
\vert u(x)\vert\,d\Ha_{\infty}^{n-1}\le c(n)\vert\vert\nabla u\vert\vert_{L^1(\Rn)}\,
%\vert\u(x)\vert\,d\Ha_{\infty}^{\delta}\le c(n,\delta)[\nabla ^ku]_{W^{n-\delta-k,1}(\Rn)\,
\end{equation}
for every $u\in C^{\infty}_0(\Rn)$.
%Here $k$ is the integer part of $n-\delta$, with the convention that 
%$[\nabla ^ku]_{W^{0,1}(\Rn):=\vert\vert\nabla ^ku\vert\vert_{L^1(\Rn)}$.
%We refer also to \cite[Theorem 1.6]{PS} where the fractional Poincar\'e-Sobolev  inequalities for
%smooth  functions $u\in C^{\infty}(\bar{\Omega})$ are considered.
%There $\Omega$ is a smooth, bounded domain and the left hand side integration is with respect to any non-negative Borel measure
%$\mu\le \Ha_{\infty}^{n-\alpha}$, $\alpha\in (0,1)$.

%%%%%%%%%%%%%%%%%%%%%%%%%%%%%%%%%%%%%%%%%%%%%%%%%%%%%%%%%%%%%%%%%%%%%%%%%%%%%%%%%%%
\section{Hausdorff content and the Choquet integral}

We recall the definition of Hausdorff content of a set $E$ in $\Rn$,  \cite[2.10.1, p.~169]{Federer}. We refer to  \cite{Ad1998} and \cite[Chapter 3]{Ada15}, too.
An  open ball centered at $x$ with radius $r>0$ is written as $B(x,r)$.

\begin{definition}[Hausdorff content]\label{defn:Haus}
Let $E$ be a set in $\Rn$, $n \ge 2$. Suppose that
$\delta \in (0, n]$.
The Hausdorff content of $E$ is defined by
\begin{equation}
\Ha_\infty^{\delta} (E) := \inf \bigg\{ \sum_{i=1}^\infty r_i^{\delta}: E \subset \bigcup_{i=1}^\infty B(x_i, r_i)\bigg\}\label{HausdorffC}
\end{equation}
where the infimum is taken over all  finite or
countable ball coverings of $E$.
The quantity \eqref{HausdorffC} is called also the $\delta$-Hausdorff content or  $\delta$-Hausdorff capacity
 or the Hausdorff content of $E$ of dimension $\delta$.
\end{definition}

The Hausdorff content has the following properties:
\begin{enumerate}
\item[(H1)] $\Ha_\infty^{\delta}(\emptyset) =0$;
\item[(H2)] if $A \subset B$ then $\Ha_\infty^{\delta}(A) \le \Ha_\infty^{\delta} (B)$;
\item[(H3)]  if $E \subset \Rn$ then 
\[
\Ha_\infty^{\delta}(E) = \inf_{E \subset U \text{ and }U \text{ is open}}\Ha_\infty^{\delta}(U); 
\]
\item[(H4)] if $(K_i)$ is a decreasing sequence of compact sets then 
\[
\Ha_\infty^{\delta}\Big(\bigcap_{i=1}^\infty K_i \Big)
= \lim_{i \to \infty} \Ha_\infty^{\delta}(K_i);
\]
\item[(H5)] if $(A_i)$ is any sequence of sets then
\[
\Ha_\infty^{\delta}\Big(\bigcup_{i=1}^\infty A_i \Big)
\le  \sum_{i=1}^\infty \Ha_\infty^{\delta}(A_i).
\]
\end{enumerate}
The proofs of properties  (H1)--(H5) are straightforward.  
Properties (H1), (H2), (H3), and (H5) yield that $\Ha_\infty^{\delta}$ is an outer capacity in the sense of N. Meyers \cite[p. 257]{Mey70}. 
By properties (H1), (H2) and (H5)  the Hausdorff content is an outer measure.

 We point out that the Hausdorff content 
 $\Ha_\infty^{\delta}$
 does not have the following property:
 if $(E_i)$ is an increasing sequence of  sets then 
\begin{equation}
\Ha_\infty^{\delta}\big(\bigcup_{i=1}^\infty E_i \big)
= \lim_{i \to \infty} \Ha_\infty^{\delta}(E_i),\label{Need_to_Choque_cap}
\end{equation}
we refer  to \cite{Dav56}, and also \cite{Dav70, SioS62}. 
Thus the Hausdorff content
$\Ha_\infty^{\delta}$
 is  not a capacity in the sense of Choquet \cite{Cho53}.

Let us recall the dyadic counterpart of $\Ha_{\infty}^{\delta}$, that is
\begin{equation}
\tilde{\Ha}_\infty^{\delta} (E) := \inf \bigg\{ \sum_{i=1}^\infty \ell (Q_i)^{\delta}: E \subset \bigcup_{i=1}^\infty Q_i\bigg\}\label{HausdorffCD}
\end{equation}
where the infimum is taken over all dyadic cube coverings of $E$.
Here $\ell (Q)$ is the side length of a cube $Q$.
It is known that  $\Ha_\infty^{\delta}(E) $ and $\tilde{\Ha}_\infty^{\delta}(E)$ are comparable to each other  for all sets $E$ in $\Rn$,
that is there are finite positive constants $c_1(n)$ and $c_2(n)$ such that
\begin{equation*}
c_1(n)\Ha_\infty^{\delta}(E)\le\tilde{\Ha}_\infty^{\delta}(E) \le c_2(n)\Ha_\infty^{\delta}(E)\,,
\end{equation*}
we refer to \cite{Adm86} and \cite[Chapter 2, Section 7]{Rog70}.
By \cite[Proposition 2.1 and Proposition 2.2]{YanY08} the dyadic Hausdorff content $\tilde{\Ha}_\infty^{\delta}(E)$ is a capacity in the sense of Choquet only when $n-1\le \delta\le n$.
D.~Yang and W.~Yuan overcame this obstacle by defining a new dyadic Hausdorff content 
$\tilde{\tilde{\Ha}}_\infty^{\delta}(E)$  by requiring in \eqref{HausdorffCD} that
$E$ is a subset of the interior of the set $\bigcup_i Q_j$, \cite[Definition 2.1]{YanY08}.
 Now this new dyadic Hausdorff content    $\tilde{\tilde{\Ha}}_\infty^{\delta}(E)$     is a capacity in the sense of Choquet for all $0<\delta\le n$.
By \cite[Proposition 2.3]{YanY08} this new Hausdorff content $\tilde{\tilde{\Ha}}_\infty^{\delta}$ is comparable to the Hausdorff content $\Ha_\infty^{\delta}$, and constants depend only on $n$.
By \cite[Theorem 2.1 and Proposition 2.4]{YanY08}  $\tilde{\tilde{\Ha}}_\infty^{\delta}$ is a strongly subadditive Choquet capacity  for all $0<\delta \le n$.
 For the strongly subadditivity we refer to \cite{Ad1998}.

Let $0 <\delta\le n$. We  recall the definition of the $\delta$-dimensional  Hausdorff measure for $E \subset \Rn$,

\[
\Ha^\delta (E) := \lim_{\rho \to 0^+}  \inf \bigg\{ \sum_{i=1}^\infty r_i^{\delta}: E \subset \bigcup_{i=1}^\infty B(x_i, r_i) \text{ and } r_i \le \rho \text{ for all } i\bigg\},
\]
where the infimum is taken over 
 all such finite or countable ball coverings of $E$ that the radius of a  ball is at most $\rho$.
Thus  there are finite positive constants $c_1(n)$ and $c_2(n)$ such that $c_1(n)\Ha^n(E) \le |E| \le c_2(n)\Ha^n(E)$ for all Lebesgue measurable 
 sets $E$ in $\Rn$.
For the properties of the Hausdorff measure we refer to \cite[Chapter~2]{EvaG92}  and \cite[pp.~54--58]{Mattila}.

The authors  would like to thank Tuomas Orponen
 for clarifying the relationship between the Hausdorff measure and Hausdorff content.
\begin{proposition}\label{prop:Borel-sets}
There exists a constant $c(n)>0$ such that  for all  $E\subset \Rn$ hold
$\Ha_\infty^n(E) \le \Ha^n(E)\le c(n)\Ha_\infty^n(E)$ .
\end{proposition}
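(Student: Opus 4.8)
The plan is to prove Proposition~\ref{prop:Borel-sets} by establishing the two inequalities separately, the left one being essentially immediate and the right one being the substantive point.

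For the lower bound $\Ha_\infty^n(E)\le\Ha^n(E)$, I would observe that the defining infimum for $\Ha^n(E)$ runs over ball coverings subject to the extra restriction $r_i\le\rho$, and then one takes $\rho\to0^+$; since imposing an additional constraint on the admissible coverings can only increase the infimum, the quantity $\inf\{\sum r_i^n : E\subset\bigcup B(x_i,r_i),\ r_i\le\rho\}$ is monotone nondecreasing as $\rho$ decreases and is bounded below by $\Ha_\infty^n(E)=\inf\{\sum r_i^n: E\subset\bigcup B(x_i,r_i)\}$ for every $\rho$. Passing to the limit gives $\Ha_\infty^n(E)\le\Ha^n(E)$ with constant $1$. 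This step requires no geometry at all.

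For the upper bound $\Ha^n(E)\le c(n)\Ha_\infty^n(E)$, the idea is that in dimension $n$ the $n$-Hausdorff content already "sees" full Lebesgue measure, so the size restriction in the definition of $\Ha^n$ costs nothing up to a dimensional constant. First I would reduce to the case $\Ha_\infty^n(E)<\infty$. Fix $\ve>0$ and choose a ball covering $E\subset\bigcup_i B(x_i,r_i)$ with $\sum_i r_i^n\le\Ha_\infty^n(E)+\ve$. The balls may have arbitrarily large radii, so to produce an admissible covering for $\Ha^n$ one must subdivide each ball $B(x_i,r_i)$ into balls of radius at most $\rho$. A clean way: cover $B(x_i,r_i)$ by roughly $(C r_i/\rho)^n$ balls of radius $\rho$ (a standard covering-number bound for a ball by smaller balls), so that the contribution of the $i$-th ball to the new sum is $\lesssim (C r_i/\rho)^n\rho^n = C^n r_i^n$; summing over $i$ yields a covering of $E$ by balls of radius $\le\rho$ with $\sum_j \tilde r_j^n\le c(n)(\Ha_\infty^n(E)+\ve)$, uniformly in $\rho$. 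Taking $\rho\to0^+$ and then $\ve\to0$ gives $\Ha^n(E)\le c(n)\Ha_\infty^n(E)$. Alternatively, one can avoid the covering-number estimate entirely by invoking the comparison $c_1(n)\Ha^n(E)\le|E|\le c_2(n)\Ha^n(E)$ for measurable $E$ already recalled in the text, together with the standard fact (Frostman-type / the isodiametric-type estimate) that $|E|\le c(n)\Ha_\infty^n(E)$; this routes the whole proof through Lebesgue measure. I would likely present the direct subdivision argument since it is self-contained and only uses that a ball of radius $r$ is covered by at most $c(n)(r/\rho)^n$ balls of radius $\rho$ whenever $\rho\le r$.

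The main obstacle is purely bookkeeping: making the subdivision of large balls into $\rho$-balls quantitative with a constant depending only on $n$, and handling balls with $r_i<\rho$ (which need no subdivision — keep them, noting $r_i^n$ is already their contribution) and the passage $\rho\to0$ correctly, observing that the bound $c(n)(\Ha_\infty^n(E)+\ve)$ does not depend on $\rho$ so the limit is harmless. There is no genuine analytic difficulty; the content of the statement is simply that, unlike for $\delta<n$, the $n$-dimensional Hausdorff content and Hausdorff measure are comparable, and the proof records why the $\rho\to0$ limit in the definition of $\Ha^n$ is inconsequential when $\delta=n$.
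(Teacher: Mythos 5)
Your proposal is correct and follows essentially the same route as the paper: the lower bound is immediate from the definitions, and the upper bound comes from taking a near-optimal ball covering for $\Ha_\infty^n$ and controlling $\Ha^n$ of each ball by $c(n)r_i^n$. The paper packages your explicit subdivision into $\rho$-balls as the single-ball estimate $\Ha^n(B(x,r))\le c(n)r^n$ combined with monotonicity and countable subadditivity of $\Ha^n$, which is the same argument in slightly more condensed form.
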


\begin{proof}
By definitions we have $\Ha_\infty^n(E) \le \Ha^n(E)$.

For an open ball we have  $\Ha^n(B(x, r)) \le c(n)r^n$.
Fix  $\ve >0$. Let us take an open ball  $B(x_i, r_i)$ covering of $E$ such that
$\sum_i r_i^n \le \Ha^n_\infty(E) + \ve$.
Since $\Ha^n$ is an outer measure, we have
 have by monotonicity and subadditivity  that  
\[
\Ha^n(E) \le \sum_i \Ha^n(B(x_i, r_i)) \le \sum_i c(n) r_i^n \le  c(n) (\Ha^n_\infty(E) + \ve).
\]
Since this holds for all $\ve>0$,  the claim follows.
\end{proof}

\begin{remark}
We use ball coverings for the definition of the $\delta$-dimensional Hausdorff measure.
In the definitions of different Hausdorff contents we use also  ball coverings and dyadic cube coverings as 
 in \cite{Adm86}, \cite{Ad1998}, \cite{DX}, \cite{OV}, \cite{YanY08}.
If one wishes to take coverings with arbitrary sets we refer to  the following result.
The  proof of  \cite[Theorem 2.5]{EvaG92} gives for all measurable $E\subset \Rn$ that
\[
\inf\bigg\{ \sum_{i=1}^\infty \omega(n) \Big(\frac{\diam C_i}{2}\Big)^n : E \subset \bigcup_{i=1}^\infty C_i \bigg\}
= |E|,
\]
where the infimum of the left-hand side is taken over all covering of $E$, and $\omega(n):=  \frac{\pi^{\frac{\delta}2}}{\Gamma(\frac{\delta}2 +1)}$.
\end{remark}

 We recall the definition of the Choquet integral.
 In the present paper $\Omega$  is always assumed to be a domain
 in $ \Rn$, $n \ge 2$,  that is,  an open, connected set.
For a function $f:\Omega\to [0,\infty]$ the integral in the sense of Choquet with respect to Hausdorff content is defined by
\begin{equation}\label{IntegralDef}
\int_\Omega f(x) \, d \Ha^{\delta}_\infty := \int_0^\infty \Ha^{\delta}_\infty\big(\{x \in \Omega : f(x)>t\}\big) \, dt. 
\end{equation}
Note  that $\Ha^{\delta}_\infty$ is monotone.
%i.e. property (H2) holds.
Hence, for  every  function $f:\Omega \to [0, \infty]$
the corresponding distribution function
$t \mapsto \Ha^{\delta}_\infty\big(\{x \in \Omega : f(x)>t\}\big)$ 
is decreasing with respect to $t$.
By decreasing property  we know that  the  distribution function 
$t \mapsto \Ha^{\delta}_\infty\big(\{x \in \Omega : f(x)>t\}\big)$ is measurable
 with respect to Lebesgue measure. Thus, $\int_0^\infty \Ha^{\delta}_\infty\big(\{x \in \Omega : f(x)>t\}\big) \, dt$ 
is well-defined as a Lebesgue integral. 
The right hand side of \eqref{IntegralDef} can be understood  also as an improper Riemann integral.
Although the Choquet  integral is well-defined for non-measurable functions we study here only measurable functions.  We recall that the Choquet integral is  a nonlinear integral and used in non-additive measure theory.

The Choquet integral with respect to Hausdorff content has the following properties:
\begin{enumerate}
\item[(C1)] $ \displaystyle \int_\Omega a f(x) \, d \Ha^{\delta}_\infty = a \int_\Omega  f(x) \, d \Ha^{\delta}_\infty$ for every $a\ge 0$;
\item[(C2)] $\displaystyle \int_\Omega f(x) \, d \Ha^{\delta}_\infty=0$ if and only if $f(x)=0$  for $\Ha^{\delta}_\infty$-almost every $x\in \Omega$;
\item[(C3)] $\displaystyle \int_\Omega \chi_E(x) \, d \Ha^{\delta}_\infty = \Ha^{\delta}_\infty(\Omega \cap E)$;
\item[(C4)] if $A\subset B$, then $\int_A f(x) \, d \Ha^{\delta}_\infty \le \int_B f(x) \, d \Ha^{\delta}_\infty$;
\item[(C5)] if $0\le f\le g$, then $\displaystyle \int_\Omega f(x) \, d \Ha^{\delta}_\infty \le \int_\Omega g(x) \, d \Ha^{\delta}_\infty$;
\item[(C6)] $\displaystyle \int_\Omega f(x)+g(x) \, d \Ha^{\delta}_\infty \le 2\Big(\int_\Omega f(x) \, d \Ha^{\delta}_\infty + \int_\Omega g(x) \, d \Ha^{\delta}_\infty\Big)$;
\item[(C7)] $\displaystyle \int_\Omega f(x)g(x) \, d \Ha^{\delta}_\infty \le 2\Big(\int_\Omega f(x)^p \, d \Ha^{\delta}_\infty\Big)^{1/p} \Big( \int_\Omega g(x)^q \, d \Ha^{\delta}_\infty\Big)^{1/q}$   when   $p, q>1$
are H\"older conjugates, that is $\frac{1}{p}+\frac{1}{q}=1$.
\end{enumerate}
For the proofs of these properties  we refer to \cite{Ad1998} and \cite[Chapter 4]{Ada15} .
% Note that sublinearity i.e.  property  (C6) with  the constant ``1'' instead of ``2'' does not 
%hold by \cite[Theorem 1]{Ad1998}.

Finally, we note that for a function $f:\Omega\to [0,\infty ]$ 
\[
\int_0^\infty \Ha^{\delta}_\infty\big(\{x \in \Omega : f(x)^p>t\}\big) \, dt = \int_0^\infty p t^{p-1}\Ha^{\delta}_\infty\big(\{x \in \Omega : f(x)>t\}\big) \, dt.
\]
Namely, by changing of  the variables, $t^{1/p} = \lambda$ we obtain
\[
\begin{split}
\int_0^\infty \Ha^{\delta}_\infty\big(\{x \in \Omega : f(x)^p>t\}\big) \, dt &=
\int_0^\infty \Ha^{\delta}_\infty\big(\{x \in \Omega : f(x)>t^{1/p}\}\big) \, dt\\
&= \int_0^\infty p \lambda^{p-1 }\Ha^{\delta}_\infty\big(\{x \in \Omega : f(x)>\lambda\}\big) \, d\lambda.
\end{split}
\]

From now on we study functions with values in $[-\infty, \infty]$, and the Choquet integral is taken  of  the absolute value of the function.
We  need the following lemma.

\begin{lemma}\label{lem:OV-lemma}
 Let $\Omega$ be an open subset of $\Rn$ and  let  $0 < \delta \le n$.
 Then there exist constants $c_1(n)$ and $c_2(n)$ such that
\begin{equation}
\frac{1}{c_1(n)}  \int_\Omega |f(x)|  \, d \Ha^{ n}_\infty \le \int_\Omega |f(x)| \, dx \le c_1(n)  \int_\Omega |f(x)|  \, d \Ha^{ n}_\infty
\label{lemma_a}
\end{equation}
and
\begin{equation}
\int_\Omega |f(x)| \, dx \le  \frac{c_2(n)}{\delta} \Big(\int_\Omega |f(x)|^{\frac{\delta}n} \, d \Ha^{\delta}_\infty \Big)^{\frac{n}{\delta}}
\label{lemma_b}
\end{equation}
for all measurable  functions $f: \Omega \to [-\infty, \infty]$.
\end{lemma}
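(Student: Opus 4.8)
The plan is to treat the two inequalities separately. For \eqref{lemma_a}, the only input needed is that Lebesgue measure and $\Ha_\infty^n$ are comparable on measurable sets: combining Proposition~\ref{prop:Borel-sets} with the comparability $c_1(n)\Ha^n(E)\le|E|\le c_2(n)\Ha^n(E)$ recorded above gives a constant $C(n)\ge1$ with $\tfrac1{C(n)}\Ha_\infty^n(E)\le|E|\le C(n)\Ha_\infty^n(E)$ for every Lebesgue measurable $E\subset\Rn$. Applying this to the superlevel sets $\{x\in\Omega:|f(x)|>t\}$, $t>0$, and integrating in $t$ — using the layer-cake identity $\int_\Omega|f|\,dx=\int_0^\infty|\{x\in\Omega:|f(x)|>t\}|\,dt$ on one side and the definition \eqref{IntegralDef} on the other — yields \eqref{lemma_a}. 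This amounts to unwinding the two distribution-function representations and presents no difficulty.

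For \eqref{lemma_b} I would first establish the geometric pointwise estimate $|E|\le c(n)\,\Ha_\infty^{\delta}(E)^{n/\delta}$ for every $E\subset\Rn$: given a ball cover $E\subset\bigcup_i B(x_i,r_i)$ with $\sum_i r_i^{\delta}\le\Ha_\infty^{\delta}(E)+\ve$, monotonicity and subadditivity of Lebesgue measure give $|E|\le c(n)\sum_i r_i^n$, and since $0<\delta/n\le1$ the map $t\mapsto t^{\delta/n}$ is subadditive, so $\big(\sum_i r_i^n\big)^{\delta/n}\le\sum_i r_i^{\delta}$; letting $\ve\to0$ gives the claim. Writing $\phi(t):=\Ha_\infty^{\delta}(\{x\in\Omega:|f(x)|>t\})$, which is nonincreasing, the layer-cake identity and this estimate give $\int_\Omega|f|\,dx\le c(n)\int_0^\infty\phi(t)^{n/\delta}\,dt$, while the change-of-variables identity stated above gives $\int_\Omega|f|^{\delta/n}\,d\Ha_\infty^{\delta}=\int_0^\infty\phi(u^{n/\delta})\,du$. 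With $p:=n/\delta\ge1$ and the substitution $t=u^{p}$ in the first integral, both quantities are expressed through $\psi(u):=\phi(u^{p})$, and \eqref{lemma_b} reduces to the one-variable inequality
\begin{equation*}
\int_0^\infty\psi(u)^{p}\,p\,u^{p-1}\,du\le\Big(\int_0^\infty\psi(u)\,du\Big)^{p}
\end{equation*}
for nonincreasing $\psi\ge0$.

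This one-variable inequality is where I expect the only real work. I would prove it by noting that monotonicity gives $\psi(u)\le\tfrac1u\int_0^u\psi$, so that, with $A(u):=\int_0^u\psi$ (locally absolutely continuous, $A'=\psi$ a.e.),
\[
\psi(u)^{p}\,p\,u^{p-1}\le\Big(\tfrac{A(u)}{u}\Big)^{p-1}\psi(u)\,p\,u^{p-1}=p\,A(u)^{p-1}A'(u)=\tfrac{d}{du}\,A(u)^{p}
\]
for almost every $u$; integrating over $(0,M)$ and letting $M\to\infty$ then gives the bound $\big(\int_0^\infty\psi\big)^{p}$, since $A(0^+)=0$ and $A(M)^{p}\nearrow\big(\int_0^\infty\psi\big)^{p}$ when the right-hand side of \eqref{lemma_b} is finite (if it is infinite there is nothing to prove). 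Tracing the substitution back yields \eqref{lemma_b} with a constant depending only on $n$, which can be written in the stated form since $\delta\le n$. The remaining points are routine: measurability of the superlevel sets, the local absolute continuity of $u\mapsto A(u)^{p}$ needed to integrate $\tfrac{d}{du}A(u)^{p}$, and the elementary subadditivity of $t\mapsto t^{\delta/n}$.
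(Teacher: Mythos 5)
Your proposal is correct and follows essentially the same route as the paper: part \eqref{lemma_a} via Proposition~\ref{prop:Borel-sets} and the layer-cake formula, and part \eqref{lemma_b} via the subadditivity of $t\mapsto t^{\delta/n}$ applied to ball covers, the change of variables $t=u^{n/\delta}$, and the monotonicity of the distribution function. The only (harmless) differences are that you pass directly from $|E|$ to $\Ha^{\delta}_\infty(E)^{n/\delta}$ instead of going through $\Ha^{n}_\infty$, and you close the one-variable inequality by integrating $\frac{d}{du}A(u)^{p}$ rather than by the paper's bound $t\,h(t)\le\int_0^\infty h$, which even yields a slightly better constant.
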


\begin{proof}
By Cavalier's principle we have
\[
\begin{split}
\int_\Omega |f(x)| \, dx &= \int_0^\infty |\{x: |f(x)|>t\}| \, dt
%\le c(n) \int_0^\infty \Ha^n_\infty\Big(\{x: |f(x)|>t\} \Big) \, dt,
\end{split}
\]
and hence the inequalities \eqref{lemma_a} follows by Proposition~\ref{prop:Borel-sets}.

For the inequality \eqref{lemma_b} we need to show that
\[
\int_\Omega |f(x)|  \, d \Ha^{n}_\infty \le \frac{c(n)}{\delta} \Big(\int_\Omega |f(x)|^{\frac{\delta}n} \, d \Ha^{\delta}_\infty \Big)^{\frac{n}{\delta}}.
\]
Let us estimate the  integrand  on the right hand side. The rest of the proof follows by the proof of  \cite[Lemma 3]{OV}.  
 Since  the mapping $t \mapsto t^{\delta/n}$  is concave on $[0,\infty )$,  we have the inequality
$\big(\sum_{i=1}^m r_i^n  \big)^{\frac{\delta}n}
\le  \sum_{i=1}^m  (r_i^n)^{\frac{\delta}n}$, where $r_i>0$.
Thus $(\Ha^n_{ \infty}(E))^{\frac{1}{n}} \le  (\Ha^\delta_{ \infty}(E))^{\frac{1}{\delta}}$.
We obtain by changing the variables that
\[
\begin{split}
\int_0^\infty \Ha^n_\infty\Big(\{x: |f(x)|>t\} \Big) \, dt
&=   \frac{n}{\delta} \int_0^\infty \Ha^n_\infty\Big(\{x: |f(x)|>t^{n/\delta}\} \Big)  t^{\frac{n}{\delta}-1}\, dt \\
&=  \frac{n}{\delta} \int_0^\infty \Ha^\delta_\infty\Big(\{x: |f(x)|^{\delta/n}>t\} \Big)^{\frac{n}\delta}  t^{\frac{n}{\delta}-1}\, dt. 
\end{split}
\]
 If we write  $h(t) := \Ha^\delta_\infty(\{x: |f(x)|^{\delta/n}>t\}$,  the function  $h$ is decreasing. Hence, we obtain
\[
t h(t) \le \int_{0}^t h(t) \, ds \le \int_{0}^t h(s) \, ds  \le \int_0^\infty h(s) \,ds.
\]
 Thus, combining the estimates gives
\[
\begin{split}
\int_0^\infty \Ha^n_\infty\Big(\{x: |f(x)|>t\} \Big) \, dt
&\le  \frac{n}{\delta} \Big( \int_0^\infty h(s) \,ds \Big)^{\frac{n}{\delta} -1} \int_0^\infty h(t)\, dt\\
&\le  \frac{n}{\delta} \Big(  \int_0^\infty h(s) \,ds \Big)^{\frac{n}{\delta}} \\
&=  \frac{n}{\delta}\Big(\int_\Omega |f(x)|^{\frac{\delta}n} \, d \Ha^{\delta}_\infty \Big)^{\frac{n}{\delta}}.
\qedhere
\end{split}
\]
\end{proof}

Let $\kappa \in [0, n)$.
If  $f \in L^1_{\loc}(\Rn)$, the  centered fractional Hardy-Littlewood maximal function of $f$ is  written as 
\[
M_\kappa f(x) := \sup_{r>0} r^{\kappa-n} \int_{B(x, r)} |f(y)| \, dy.
\]
 The non-fractional  centered maximal function $M_0 f$ is written as $Mf$.
If $f$ is a defined only on $\Omega$ in $\Rn$, then $f$ is  defined  to be zero  on $\Rn\setminus\Omega$ in  the definition of $M_\kappa $.

D. R. Adams in 1986 \cite{Adm86}   and J. Orobitg and J. Verdera  in 1998 \cite{OV} proved boundedness of the maximal operator 
 in the sense of  Choquet with respect to Hausdorff content for $p=1$ and $p>\delta /n$, respectively.
These papers as well as  \cite{Ad1998}  seem to assume that the dyadic Hausdorff content is always a Choquet capacity and they  used this to conclude that   the Choquet integral is sublinear.
 However,  Yang and Yan \cite{YanY08}   showed  that the 
dyadic Hausdorff content is a Choquet capacity if and only if the dimension $\delta$ satisfies $n-1<\delta\le n$. 
The modified dyadic Hausdorff content  $\tilde{\tilde{\Ha}}_\infty^{\delta}$ is   a strongly subadditive Choquet capacity  
for all $0<\delta \le n$, and thus       \cite[Theorem 6.3, p.~75]{Den94}, see also   \cite[54.2]{Cho53}  
and \cite[pp.~248--249]{Ang77},  yield
\[
\int_\Omega \sum_{i=1}^{\infty} f_i(x) d \tilde{\tilde{\Ha}}_\infty^{\delta} \le  \sum_{i=1}^{\infty} \int_\Omega  f_i(x) d \tilde{\tilde{\Ha}}_\infty^{\delta}.
\] 
Since $\tilde{\tilde{\Ha}}_\infty^{\delta}$ is comparable with ${\Ha}_\infty^{\delta}$ we obtain
\[
\int_\Omega \sum_{i=1}^{\infty} f_i(x) d \Ha^{\delta}_\infty \le c(n) \sum_{i=1}^{\infty} \int_\Omega  f_i(x) d \Ha^{\delta}_\infty,
\] 
as it is pointed out in \cite[Remark 2.4]{YanY08}. 
Hence   the following theorem holds.

\begin{theorem}[Adams--Orobitg--Verdera]\label{thm:M-bounded}
Let $\delta \in (0, n)$. Then there exists a constant $c$ depending only on $n$, $\delta$, and $p$ such that
for every $p>\delta/n$ and for every $f \in L^1_{\loc}(\Rn)$ we have
\[  
\int_\Rn (Mf(x))^p \, d \Ha^{\delta}_\infty \le c \int_\Rn |f(x)|^p \, d \Ha^{\delta}_\infty.
\]
\end{theorem}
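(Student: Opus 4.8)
The plan is to follow the Hardy--Littlewood scheme in the form used by Orobitg--Verdera: first establish a weak-type endpoint estimate at the exponent $p_0:=\delta/n$, and then upgrade it to the strong-type bound for $p>\delta/n$ by a Marcinkiewicz-type truncation argument phrased entirely in terms of distribution functions, so that the non-additivity of the Choquet integral plays essentially no role. Throughout one may assume $f\ge0$ and $\int_{\Rn}|f|^p\,d\Ha^{\delta}_\infty<\infty$ (otherwise there is nothing to prove); by property (C7), finiteness of $\Ha^{\delta}_\infty$ on balls, and \eqref{lemma_b}, this makes the mean value of $f$ over any ball finite, which in particular bounds from above the radii of the balls occurring below.

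\emph{Step 1: weak-type estimate.} I would prove that
\begin{equation*}
\Ha^{\delta}_\infty\big(\{Mf>\lambda\}\big)\ \le\ \frac{c(n,\delta)}{\lambda^{\delta/n}}\int_{\Rn} f^{\delta/n}\,d\Ha^{\delta}_\infty\qquad(\lambda>0).
\end{equation*}
Since $Mf$ is lower semicontinuous, $\{Mf>\lambda\}$ is open; covering each of its points $x$ by a ball $B_x=B(x,r_x)$ with $\lambda\,|B_x|<\int_{B_x}f\,dx$ (the radii uniformly bounded by the remark above) and applying the Vitali $5r$-covering lemma, one obtains a pairwise disjoint subfamily $\{B_i\}$ with $\{Mf>\lambda\}\subset\bigcup_i 5B_i$. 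Then countable subadditivity (H5), $\Ha^{\delta}_\infty(B(x,r))\le r^\delta$, the elementary fact that $r_i^\delta$ is a dimensional multiple of $|B_i|^{\delta/n}\le\lambda^{-\delta/n}\big(\int_{B_i}f\,dx\big)^{\delta/n}$, and \eqref{lemma_b} on the open set $B_i$ in the form $\big(\int_{B_i}f\,dx\big)^{\delta/n}\le c(n,\delta)\int_{B_i}f^{\delta/n}\,d\Ha^{\delta}_\infty$, together yield a bound of the shape $c(n,\delta)\lambda^{-\delta/n}\sum_i\int_{B_i}f^{\delta/n}\,d\Ha^{\delta}_\infty$. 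To sum this series I pass to the modified dyadic Hausdorff content $\tilde{\tilde{\Ha}}_\infty^{\delta}$, which is a strongly subadditive Choquet capacity comparable to $\Ha^{\delta}_\infty$: for each $t>0$ the sets $B_i\cap\{f^{\delta/n}>t\}$ are pairwise disjoint, so strong subadditivity gives $\sum_i\tilde{\tilde{\Ha}}_\infty^{\delta}(B_i\cap\{f^{\delta/n}>t\})\le\tilde{\tilde{\Ha}}_\infty^{\delta}(\{f^{\delta/n}>t\})$, and integrating in $t$ and using comparability yields $\sum_i\int_{B_i}f^{\delta/n}\,d\Ha^{\delta}_\infty\le c(n)\int_{\Rn}f^{\delta/n}\,d\Ha^{\delta}_\infty$.

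\emph{Step 2: truncation and integration in $\lambda$.} Fix $p>\delta/n$. For $\lambda>0$ split $f=f_\lambda+f^{\lambda}$ with $f^{\lambda}:=\min(f,\lambda/2)$ and $f_\lambda:=(f-\lambda/2)_+$. Then $Mf^{\lambda}\le\lambda/2$, so by subadditivity of $M$ and monotonicity of $\Ha^{\delta}_\infty$ we get $\{Mf>\lambda\}\subset\{Mf_\lambda>\lambda/2\}$, and applying Step 1 to $f_\lambda\le f\chi_{\{f>\lambda/2\}}$ together with (C5),
\begin{equation*}
\Ha^{\delta}_\infty(\{Mf>\lambda\})\ \le\ \frac{c(n,\delta)}{\lambda^{\delta/n}}\int_{\{f>\lambda/2\}}f^{\delta/n}\,d\Ha^{\delta}_\infty .
\end{equation*}
Writing $\mu(t):=\Ha^{\delta}_\infty(\{f>t\})$, the layer-cake identity recalled just before the theorem gives $\int_{\{f>a\}}f^{\delta/n}\,d\Ha^{\delta}_\infty=a^{\delta/n}\mu(a)+\tfrac{\delta}{n}\int_a^\infty t^{\delta/n-1}\mu(t)\,dt$, and the same identity applied to $(Mf)^p$ gives $\int_{\Rn}(Mf)^p\,d\Ha^{\delta}_\infty=p\int_0^\infty\lambda^{p-1}\Ha^{\delta}_\infty(\{Mf>\lambda\})\,d\lambda$. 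Inserting the previous bound and evaluating: the contribution of the term $a^{\delta/n}\mu(a)$ is, after the substitution $\sigma=\lambda/2$, a multiple of $\int_0^\infty\sigma^{p-1}\mu(\sigma)\,d\sigma$; the contribution of $\tfrac{\delta}{n}\int_{\lambda/2}^\infty t^{\delta/n-1}\mu(t)\,dt$ is handled by Tonelli's theorem (an honest Lebesgue double integral in $(\lambda,t)$) and reduces to $c(n,\delta)\int_0^\infty t^{\delta/n-1}\mu(t)\big(\int_0^{2t}\lambda^{p-1-\delta/n}\,d\lambda\big)\,dt$, where the inner integral converges \emph{precisely because} $p>\delta/n$ and equals a constant times $t^{p-\delta/n}$. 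Both contributions are therefore bounded by $c(n,\delta,p)\int_0^\infty t^{p-1}\mu(t)\,dt=c(n,\delta,p)\int_{\Rn}f^p\,d\Ha^{\delta}_\infty$, which is the assertion, the constant blowing up like $(p-\delta/n)^{-1}$ as $p\downarrow\delta/n$.

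The only real obstacle is Step 1: the classical weak-type argument unavoidably produces a sum over disjoint balls, and since the Choquet integral with respect to $\Ha^{\delta}_\infty$ is neither additive nor superadditive over disjoint sets, the ball-by-ball estimates must be routed through the strongly subadditive modified dyadic content $\tilde{\tilde{\Ha}}_\infty^{\delta}$ and its comparability with $\Ha^{\delta}_\infty$. Everything else is routine: the truncation is purely pointwise, the only integration in a parameter takes place inside the Lebesgue double integral supplied by the layer-cake formula, and the hypothesis $p>\delta/n$ enters exactly to guarantee convergence of that inner $\lambda$-integral.
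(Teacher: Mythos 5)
First, a point of comparison: the paper does not prove Theorem~\ref{thm:M-bounded} at all --- it is quoted from Adams and from Orobitg--Verdera, and the only argument supplied in the text is the remark that the countable \emph{sub}additivity of the Choquet integral, used implicitly in those papers, is legitimate because the modified dyadic content $\tilde{\tilde{\Ha}}_\infty^{\delta}$ is a strongly subadditive Choquet capacity comparable to $\Ha_\infty^{\delta}$. So your attempt at a self-contained proof is going beyond what the paper does, and its overall architecture (weak type at the endpoint $\delta/n$, then truncation and a layer-cake computation) is the standard and reasonable one; your Step~2 is correct as written, and the role of $p>\delta/n$ is exactly as you describe.

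The gap is in Step~1, precisely at the point you yourself flag as ``the only real obstacle.'' You claim that strong subadditivity of $\tilde{\tilde{\Ha}}_\infty^{\delta}$ yields, for pairwise disjoint balls $B_i$,
\[
\sum_i\tilde{\tilde{\Ha}}_\infty^{\delta}\bigl(B_i\cap\{f^{\delta/n}>t\}\bigr)\le\tilde{\tilde{\Ha}}_\infty^{\delta}\bigl(\{f^{\delta/n}>t\}\bigr).
\]
This is \emph{super}additivity over disjoint sets, and strong subadditivity (submodularity, $C(A\cup B)+C(A\cap B)\le C(A)+C(B)$) gives only the opposite inequality $C(A\cup B)\le C(A)+C(B)$; it is this subadditive direction that Denneberg's theorem and the paper's Remark~2.4 citation provide. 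Superadditivity is genuinely false for Hausdorff content when $\delta<n$: take $E=[0,1]^n$ and let $B_1,\dots,B_{N^n}$ be the disjoint dyadic subcubes of side $1/N$; then $\sum_i\Ha_\infty^{\delta}(B_i\cap E)\approx N^{n-\delta}\to\infty$ while $\Ha_\infty^{\delta}(E)$ is a fixed constant, and the same holds for $\tilde{\tilde{\Ha}}_\infty^{\delta}$ by comparability. Consequently the inequality $\sum_i\int_{B_i}f^{\delta/n}\,d\Ha_\infty^{\delta}\le c\int_{\Rn}f^{\delta/n}\,d\Ha_\infty^{\delta}$ that your weak-type estimate rests on fails in general (the same example, with $f=\chi_{[0,1]^n}$ and small $\lambda$, shows that a Vitali selection of small balls really can make the left-hand side blow up). This cannot be patched by a different covering lemma: any argument that ends by summing Choquet integrals over the disjoint selected balls runs into the same non-superadditivity, which is why the proofs in the literature take a different route (Adams proves the $p=1$ case by duality between $L^1(\Ha_\infty^{\delta})$ and the Morrey class of measures $\mu$ with $\mu(B(x,r))\le r^{\delta}$, reducing matters to a Fefferman--Stein type inequality against such measures). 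To make your outline work you would need to replace the summation step by an argument of that kind, or by a packing argument that exploits maximality of the selected cubes rather than superadditivity of the content.
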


%Comparing to \cite{OV} here is a lower bound for $\delta$ and hidden assumptions of $f$ have been written up.  
Note that in Theorem
\ref{thm:M-bounded}  the exponent $p$ can be smaller than $1$. We need  also the next result by Adams that covers the previous theorem. It shows that the fractional maximal operator is bounded
when the  Choquet integrals are taken with respect to the $\delta$-dimensional Hausdorff content.
We point out  that the dimension  of the Hausdorff content is smaller on the left hand side in  the following inequality than on the right hand side.

\begin{theorem}[Theorem 7(a) of \cite{Ad1998}]\label{thm:fractional-maximal-function}
Suppose that $\delta \in (0, n]$ and $\kappa \in [0, n)$.  If $p \in (\delta/n, \delta /\kappa)$,
then there exists a constant $c$ depending only on $n$, $\delta$, $\kappa$, and $p$ such that
 for every $f \in L^1_{\loc}(\Rn)$ we have
\[  
\int_\Rn (M_\kappa f(x))^p \, d \Ha^{\delta-\kappa p}_\infty \le c \int_\Rn |f(x)|^p \, d \Ha^{\delta}_\infty.
\]
\end{theorem}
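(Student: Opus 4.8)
The plan is to pass to a dyadic model of $M_\kappa$, perform a Calderón--Zygmund stopping-time decomposition of the superlevel sets of the dyadic operator, control the contribution of each generated cube by the Choquet tools of Section~2, and add up the contributions by exploiting the sparseness of the stopping cubes. First I would reduce to the case where $f\ge 0$ is bounded with compact support: replacing $f$ by $\min(f,j)\chi_{B(0,j)}$ and letting $j\to\infty$, the operator $M_\kappa$ of the truncations increases pointwise to $M_\kappa f$, and although $\Ha^{\delta-\kappa p}_\infty$ itself lacks the increasing-set continuity \eqref{Need_to_Choque_cap}, the comparable dyadic capacity $\tilde{\tilde{\Ha}}^{\delta-\kappa p}_\infty$ is a genuine Choquet capacity, so monotone convergence holds for its Choquet integral and hence, up to a dimensional constant, for $\Ha^{\delta-\kappa p}_\infty$. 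Next, using a finite family $\mathcal D_1,\dots,\mathcal D_{N}$ of shifted (``adjacent'') dyadic grids, for which every ball $B(x,r)$ is contained in some $Q\in\mathcal D_\nu$ with $\ell(Q)\le C(n)r$, and recalling $\kappa<n$, one obtains $M_\kappa f\le C(n)\sum_\nu M^{\mathcal D_\nu}_\kappa f$, where $M^{\mathcal D}_\kappa g(x):=\sup\{\ell(Q)^{\kappa-n}\int_Q|g| : x\in Q\in\mathcal D\}$. Raising to the power $p$ via $\big(\sum_\nu a_\nu\big)^p\le c(N,p)\sum_\nu a_\nu^p$ and using subadditivity of the Choquet integral with respect to $\Ha^{\delta-\kappa p}_\infty$ — valid up to a constant $c(n)$ because $\tilde{\tilde{\Ha}}^{\delta-\kappa p}_\infty$ is a strongly subadditive Choquet capacity comparable to $\Ha^{\delta-\kappa p}_\infty$, the dimension $\delta-\kappa p$ lying in $(0,n]$ since $p<\delta/\kappa$ — the problem is reduced to proving the inequality for a single fixed dyadic operator $M^{\mathcal D}_\kappa$.

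Fix the grid $\mathcal D$. For $t>0$ let $\mathcal Q_t$ be the cubes of $\mathcal D$ that are maximal with respect to $\ell(Q)^{\kappa-n}\int_Q f>t$; these are pairwise disjoint and their union is $\{M^{\mathcal D}_\kappa f>t\}$, so, covering each $Q$ by a ball of radius $\sqrt n\,\ell(Q)$,
\[
\Ha^{\delta-\kappa p}_\infty\big(\{M^{\mathcal D}_\kappa f>t\}\big)\le c(n)\sum_{Q\in\mathcal Q_t}\ell(Q)^{\delta-\kappa p}.
\]
Writing the target Choquet integral as $\int_0^\infty p t^{p-1}\Ha^{\delta-\kappa p}_\infty\big(\{M^{\mathcal D}_\kappa f>t\}\big)\,dt$, integrating the last display against $p t^{p-1}\,dt$ (a cube $Q$ lies in $\mathcal Q_t$ precisely for $t$ in a half-open interval whose right endpoint is $\ell(Q)^{\kappa-n}\int_Q f$) and discarding a nonnegative term, the whole theorem is reduced to
\[
\sum_{Q\in\mathcal S}\ell(Q)^{\delta-np}\Big(\int_Q f\,dx\Big)^p\le c\int_{\Rn}f^p\,d\Ha^\delta_\infty ,
\]
where $\mathcal S:=\bigcup_{t>0}\mathcal Q_t$ consists of the dyadic cubes $Q$ on which $\ell(Q)^{\kappa-n}\int_Q f$ strictly exceeds its value on every proper dyadic ancestor.

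For a single arbitrary cube $Q$ I would next prove the cube-wise Choquet estimate
\[
\ell(Q)^{\delta-np}\Big(\int_Q f\,dx\Big)^p\le c(n,\delta,p)\int_Q f^p\,d\Ha^\delta_\infty :
\]
Lemma~\ref{lem:OV-lemma}, inequality \eqref{lemma_b}, bounds $\int_Q f\,dx$ above by a power of $\int_Q f^{\delta/n}\,d\Ha^\delta_\infty$; Hölder's inequality for Choquet integrals (C7), applied with the conjugate pair $\big(np/\delta,\,(np/\delta)'\big)$ — here the hypothesis $p>\delta/n$ is exactly what makes $np/\delta>1$ — converts this into a power of $\int_Q f^p\,d\Ha^\delta_\infty$ times a power of $\Ha^\delta_\infty(Q)$; and $\Ha^\delta_\infty(Q)\le c(n)\ell(Q)^\delta$ absorbs the leftover power of $\ell(Q)$, the exponents cancelling exactly. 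Applied to each cube of $\mathcal S$, this turns the left-hand side of the reduced inequality into $c\sum_{Q\in\mathcal S}\int_Q f^p\,d\Ha^\delta_\infty$.

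The remaining step, $\sum_{Q\in\mathcal S}\int_Q f^p\,d\Ha^\delta_\infty\le c\int_{\Rn}f^p\,d\Ha^\delta_\infty$, is decisive and cannot be done termwise: the Choquet integral with respect to $\Ha^\delta_\infty$, while subadditive, is \emph{not} superadditive — already for disjoint cubes $\sum_j\Ha^\delta_\infty(Q_j)$ may dwarf $\Ha^\delta_\infty\big(\bigcup_j Q_j\big)$ once $\delta<n$. The device that saves the day is to reorganise $\mathcal S$ into principal (Carleson) cubes with a fixed multiplicative gap $A$ between successive generations: the defining gap forces $\sum\{|Q'| : Q'\text{ a principal child of }Q\}\le A^{-n/(n-\kappa)}|Q|$, which is less than $\tfrac12|Q|$ as soon as $A$ is large, so the cores $E(Q):=Q\setminus\bigcup\{\text{principal children of }Q\}$ are pairwise disjoint with $|E(Q)|\ge\tfrac12|Q|$; attaching each $Q\in\mathcal S$ to the smallest principal cube containing it (whose defining value it then shares up to the factor $A$), a geometric summation over the generations, combined with the cube-wise estimate applied on the principal cubes and the disjointness of the cores, yields the bound. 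I expect this Carleson-type bookkeeping — the mechanism that neutralises the non-additivity of the Hausdorff-content Choquet integral — to be by far the most delicate part of the argument. It is worth noting that the seductive shortcut of combining the pointwise Hedberg bound $M_\kappa f\le c\,\|f\|_{L^1(\Rn)}^{\kappa/n}(Mf)^{1-\kappa/n}$ with Theorem~\ref{thm:M-bounded} at the admissible exponent $p(1-\kappa/n)>(\delta-\kappa p)/n$ does \emph{not} work: the global $L^1$-norm overcounts badly when the mass of $f$ is spread over many well-separated pieces.
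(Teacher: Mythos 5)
First, a remark on what you are being compared against: the paper does not prove this statement at all — it is imported verbatim as Theorem 7(a) of Adams \cite{Ad1998} — so any complete argument you gave would necessarily be ``a different route.'' Much of your proposal is sound: the truncation/monotone-convergence reduction via $\tilde{\tilde{\Ha}}^{\delta-\kappa p}_\infty$, the passage to shifted dyadic grids, the layer-cake bookkeeping reducing the theorem to $\sum_{Q\in\mathcal S}\ell(Q)^{\delta-np}\big(\int_Q f\big)^p\le c\int f^p\,d\Ha^{\delta}_\infty$, and the cube-wise estimate $\ell(Q)^{\delta-np}\big(\int_Q f\big)^p\le c\int_Q f^p\,d\Ha^{\delta}_\infty$ obtained from Lemma~\ref{lem:OV-lemma}, (C7) and $\Ha^{\delta}_\infty(Q)\le c\,\ell(Q)^\delta$ are all correct, as is your rejection of the Hedberg shortcut. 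But the step you yourself call decisive is where the proof breaks, and it breaks \emph{before} the Carleson machinery starts: the displayed intermediate inequality $\sum_{Q\in\mathcal S}\int_Q f^p\,d\Ha^{\delta}_\infty\le c\int_{\Rn}f^p\,d\Ha^{\delta}_\infty$ is false. Take $f=\chi_{Q_0}$ for a fixed cube $Q_0$ of the grid. For every dyadic ancestor $Q_k\supset Q_0$ of side $2^k\ell(Q_0)$ one has $a_{Q_k}:=\ell(Q_k)^{\kappa-n}\int_{Q_k}f=2^{k(\kappa-n)}\ell(Q_0)^{\kappa-n}|Q_0|$, which is strictly decreasing in $k$ because $\kappa<n$; hence every ancestor exceeds all of its own ancestors and belongs to $\mathcal S$, and each contributes $\int_{Q_k}f^p\,d\Ha^{\delta}_\infty=\Ha^{\delta}_\infty(Q_0)>0$. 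The left-hand side is therefore $+\infty$ while the right-hand side equals $\Ha^{\delta}_\infty(Q_0)$. (Note this already fails for $\delta=n$.) The cube-wise estimate is simply too lossy to be applied to every $Q\in\mathcal S$ before summing: along the chain of ancestors the true contribution $\ell(Q_k)^{\delta-np}\big(\int_{Q_k}f\big)^p$ tends to $0$ geometrically, and that decay — which is exactly what makes the original sum converge — is discarded when each term is replaced by $\int_{Q_k}f^p\,d\Ha^{\delta}_\infty$.

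The principal-cube repair does not rescue this. The gap condition controls the \emph{values} $a_Q$ and gives disjoint cores of proportional Lebesgue measure, but neither feature touches the actual obstruction. To close the argument you would need either a packing inequality $\sum_{Q:\,\pi(Q)=P}\ell(Q)^{\delta-\kappa p}\le C\ell(P)^{\delta-\kappa p}$ — which does not follow from the construction, since nothing prevents $\mathcal S$ from containing exponentially many cubes of each generation inside $P$ with comparable values $a_Q\in(a_P,Aa_P]$, and $\sum_k N_k 2^{-k(\delta-\kappa p)}$ then diverges once $\delta-\kappa p$ is not close to $n$ — or a superadditivity statement of the form $\sum_P\int_{E(P)}g\,d\Ha^{\delta}_\infty\le C\int g\,d\Ha^{\delta}_\infty$ over the disjoint cores, which is precisely the failure you warned about: for disjoint cubes $\sum_j\Ha^{\delta}_\infty(Q_j)$ can exceed $\Ha^{\delta}_\infty(\bigcup_jQ_j)$ by an arbitrarily large factor when $\delta<n$, so disjointness of the cores buys nothing for a non-additive integral. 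The known proofs avoid the termwise passage altogether: Adams derives the strong type from a capacitary weak-type estimate at the endpoint exponent $\delta/n$ (obtained from a Vitali covering of $\{M_\kappa f>t\}$, never summing Choquet integrals over the covering balls) combined with an interpolation theorem for Choquet integrals, and Orobitg--Verdera's argument for $\kappa=0$ likewise charges the stopping cubes directly to near-optimal covers of the level sets of $f$. Some idea of that kind is missing here; as written, the proof does not close.
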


%%%%%%%%%%%%%%%%%%%%%%%%%%%%%%%%%%%%%%%%%%%%%%%%%%%%%%%%%%%%%%%%%%%%%%%%%%%%%%%%%%%
%%%%%%%%%%%%%%%%%%%%%%%%%%%%%%%%%%%%%%%%%%%%%%%%%%%%%%%%%%%%%%%%%%%%%%%%%%%%%%%%%%%

\section{Inequalities for $C^1$-functions}

We recall the definition of John domains. 
The  notion  was introduced by F. John  in \cite{J} where it was called  an inner  radius and outer radius property. 
Later, domains with this property were
named as John domains.

\begin{definition}\label{bounded-john}
Suppose that  $\Omega$ is a bounded domain in $ \Rn$, $n\geq 2$. The domain $\Omega$ is an $(\alpha, \beta)$-John domain if there exist constants $0< \alpha \le \beta<\infty$ and a point $x_0 \in \Omega$ such that each point $x\in \Omega$ can be joined to $x_0$ by a rectifiable curve $\gamma_x:[0,\ell(\gamma_x)] \to \Omega$, parametrized by its arc length, such that $\gamma_x(0) = x$, $\gamma_x(\ell(\gamma_x)) = x_0$, $\ell(\gamma_x)\leq \beta\,,$ and
\[
\dist\big(\gamma_x(t), \partial \Omega \big)
\geq \frac{\alpha}{\beta} t
 \quad \text{for all} \quad t\in[0, \ell(\gamma_x)].
\]
The point $x_0$ is called a John center of $\Omega$.
\end{definition}
Examples of John domains are convex domains and domains with Lipschitz boundary, but also domains with fractal boundaries such as the von Koch snow flake.
Outward spires are not allowed.

We show  that the Poincar\'e inequality in the sense of Choquet with respect to  Hausdorff content  is valid  in John domains.
From now on  we denote the integral average of a function $u$ over a ball $B$ by $u_B$ where the integrals are taken with respect to the Lebesgue measure.

\begin{theorem}\label{thm:Poincare}
Suppose that  $\Omega$ is a bounded  $(\alpha, \beta)$-John domain in $\Rn$.
If  $\delta \in (0, n]$ and $p \in (\delta/n, \infty)$, then there exists a constant $c$ depending only on $n$, $\delta$, $p$, 
and John constants $\alpha$ and $\beta$ such that  
\begin{equation}\label{poincare-eq}
\inf_{b \in \R} \int_\Omega |u(x)-b|^p \, d \Ha^{\delta}_\infty
\le c(n,p,\delta )\beta^p\biggl(\frac{\beta}{\alpha}\biggr)^{2np} \int_\Omega |\nabla u(x)|^p \, d \Ha^{\delta}_\infty
\end{equation}
for all $u \in C^1(\Omega)$.
\end{theorem}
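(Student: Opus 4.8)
The plan is to reduce \eqref{poincare-eq} to the Choquet boundedness of the Hardy--Littlewood maximal operator, Theorem~\ref{thm:M-bounded}, by means of a pointwise estimate available in John domains. We may assume the right-hand side of \eqref{poincare-eq} is finite, for otherwise there is nothing to prove. Since $p>\delta/n$ and $\Ha^\delta_\infty(\Omega)\le(2\beta)^\delta<\infty$, the elementary bound $|\nabla u|^{\delta/n}\le|\nabla u|^p+1$ together with properties~(C3) and~(C6) gives $\int_\Omega|\nabla u|^{\delta/n}\,d\Ha^\delta_\infty<\infty$, whence Lemma~\ref{lem:OV-lemma} (inequality~\eqref{lemma_b}) yields $|\nabla u|\in L^1(\Omega)$; thus $g:=|\nabla u|\chi_\Omega\in L^1_{\loc}(\Rn)$ and every maximal function below is well defined. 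Fix a John center $x_0$ of $\Omega$ and a ball $B_0$ centered at $x_0$ with $\overline{B_0}\subset\Omega$ of radius comparable to $\alpha$; then $u_{B_0}$ is finite because $u\in C^1(\Omega)$.

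For each $x\in\Omega$ one joins $B_0$ to a ball $B_x\ni x$ of radius comparable to $\dist(x,\partial\Omega)$ by a finite chain of balls $B_0=B^{(0)},B^{(1)},\dots,B^{(N)}=B_x$ running along the John curve $\gamma_x$, say $B^{(i)}=B(\gamma_x(t_i),r_i)$, with consecutive balls overlapping, with $r_i$ comparable to $\dist(\gamma_x(t_i),\partial\Omega)\ge(\alpha/\beta)t_i$, and with uniformly bounded overlap; the John condition is exactly what makes such a chain exist with the relevant constants governed by $\beta/\alpha$. Telescoping $u_{B_0}-u(x)=(u_{B_x}-u(x))+\sum_i(u_{B^{(i)}}-u_{B^{(i+1)}})$, estimating the last term by $|u(x)-u_{B_x}|\le c(n)\int_{B_x}|\nabla u(y)|\,|x-y|^{1-n}\,dy$ and the consecutive differences by $|u_{B^{(i)}}-u_{B^{(i+1)}}|\le c(n)\,r_i\,|B^{(i)}|^{-1}\!\int_{2B^{(i)}}|\nabla u|$, and summing with the help of the bounded overlap, one obtains the classical chain estimate, as in \cite{Bojarski1988},
\[
|u(x)-u_{B_0}|\le c(n)\Bigl(\frac{\beta}{\alpha}\Bigr)^{2n}\int_\Omega\frac{|\nabla u(y)|}{|x-y|^{n-1}}\,dy,\qquad x\in\Omega.
\]
Because $\Omega$ is bounded with $\diam\Omega\le2\beta$, splitting this integral over the dyadic annuli $B(x,2^{1-j}\beta)\setminus B(x,2^{-j}\beta)$, $j\ge0$, whose union contains $\Omega\setminus\{x\}$, and bounding the $j$-th annulus by $c(n)2^{-j}\beta\,Mg(x)$, one obtains (since $\sum_{j\ge0}2^{-j}<\infty$)
\[
|u(x)-u_{B_0}|\le c(n)\,\beta\Bigl(\frac{\beta}{\alpha}\Bigr)^{2n}Mg(x),\qquad x\in\Omega.
\]

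Raising this pointwise inequality to the power $p$, using monotonicity and positive homogeneity of the Choquet integral (properties~(C5) and~(C1)), and then enlarging the set of integration from $\Omega$ to $\Rn$ (property~(C4)), one arrives at
\[
\int_\Omega|u(x)-u_{B_0}|^p\,d\Ha^\delta_\infty\le c(n)^p\beta^p\Bigl(\frac{\beta}{\alpha}\Bigr)^{2np}\int_{\Rn}\bigl(Mg(x)\bigr)^p\,d\Ha^\delta_\infty.
\]
Since $p>\delta/n$ and $g\in L^1_{\loc}(\Rn)$, Theorem~\ref{thm:M-bounded} bounds the last integral by $c(n,\delta,p)\int_{\Rn}|g(x)|^p\,d\Ha^\delta_\infty$, which equals $c(n,\delta,p)\int_\Omega|\nabla u(x)|^p\,d\Ha^\delta_\infty$ since $\{x\in\Rn:|g(x)|^p>t\}=\{x\in\Omega:|\nabla u(x)|^p>t\}$ for every $t\ge0$. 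Taking $b=u_{B_0}$ in the infimum on the left of \eqref{poincare-eq} and collecting the constants gives the asserted inequality with constant $c(n,p,\delta)\,\beta^p(\beta/\alpha)^{2np}$.

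I expect the main obstacle to lie not in the Choquet machinery, but in the chain construction of the second paragraph --- in particular in keeping track of how its constant depends on $\beta/\alpha$. The feature peculiar to the Choquet setting, namely that $\Ha^\delta_\infty$ fails \eqref{Need_to_Choque_cap} so that a telescoping estimate cannot be integrated against it term by term, never has to be met head-on: it is already absorbed into Theorem~\ref{thm:M-bounded}, whose proof passes through the modified dyadic content $\tilde{\tilde{\Ha}}_\infty^{\delta}$ of \cite{YanY08}. By contrast, the passage from the Riesz potential to the maximal function is elementary and uses only the boundedness of $\Omega$, and the final step is a direct appeal to Theorem~\ref{thm:M-bounded}.
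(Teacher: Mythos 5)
Your argument is essentially the paper's own proof: the pointwise Riesz-potential estimate in a John domain, the reduction of the Riesz potential to $c(n)\,\beta\,(\beta/\alpha)^{2n} M|\nabla u|(x)$ on a bounded set, and then the Choquet boundedness of the maximal operator; the only substantive difference is that you re-derive the chain estimate that the paper simply cites from \cite{R}, \cite{Bojarski1988}, \cite{Martio}, \cite{Hurri}, and you reach $|\nabla u|\in L^1(\Omega)$ by a slightly different (but equally valid) use of Lemma~\ref{lem:OV-lemma}.

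One point does need repair: Theorem~\ref{thm:M-bounded} is stated only for $\delta\in(0,n)$, so your appeal to it leaves the endpoint $\delta=n$, which the theorem claims, uncovered. The paper disposes of this case separately: for $\delta=n$ one has $p>\delta/n=1$, so the classical $(p,p)$-Poincar\'e inequality for John domains applies, and both sides are converted between the Lebesgue and Choquet integrals by \eqref{lemma_a} of Lemma~\ref{lem:OV-lemma}. Alternatively, within your own scheme, \eqref{lemma_a} reduces $\int_{\Rn}(Mg)^p\,d\Ha^{n}_\infty$ to the ordinary $L^p$ norm and the classical Hardy--Littlewood maximal theorem (valid since $p>1$) finishes the argument; either one-line fix closes the gap.
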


\begin{proof}
Suppose  that $0<\delta<n$.
Let $u \in C^1(\Omega)$. We may assume that the right hand side  of the above inequality
 is finite.
By Lemma~\ref{lem:OV-lemma} we have
\[
\int_\Omega |\nabla u(x)|^{\frac{pn}{\delta}} \, dx \le c(n, \delta) \Big( \int_\Omega |\nabla u(x)|^{p} d \Ha^{\delta}_\infty\Big)^{\frac{n}{\delta}}.
\] 
By the assumption $p> \frac{\delta}{n}$.  Hence we have $\frac{pn}{\delta}>1$. This together with the boundedness of $\Omega$ yields that 
 $|\nabla u| \in L^{1}(\Omega)$. Thus the Riesz potential and the maximal function of $|\nabla u|$ are well-defined.

Since $\Omega$ is a John domain  we obtain by \cite[Theorem]{R}, \cite{Bojarski1988}, \cite{Martio},  and \cite{Hurri}
the pointwise estimate
\begin{equation}\label{pointwise}
|u(x) -u_B| \le c(n, \alpha, \beta) \int_\Omega \frac{|\nabla u(y)|}{|x-y|^{n-1}}\,dy
\end{equation}
for every $x \in \Omega$.   Here, $B=B(x_0, c(n)\alpha^2/\beta)$ and
$c(n,\alpha ,\beta)=c(n)(\beta /\alpha)^{2n}$ by \cite{Hurri}.
The Riesz potential can be estimated by the Hardy-Littlewood maximal operator. Thus by \cite[Lemma 2.8.3]{Zie89} we have
\[
|u(x) -u_B| \le c(n,p)\biggl(\frac{\ \beta}{\alpha}\biggr)^{2n} \diam(\Omega) M |\nabla u| (x) 
\]
for every $x \in \Omega$. Hence,  by properties (C5) and (C1) of the Choquet integral we obtain
\[
\int_\Omega |u(x) -u_B|^p d \Ha^{\delta}_\infty
\le  c(n, p)\biggl(\frac{\beta}{\alpha}\biggr)^{2np} \diam(\Omega)^{p} \int_\Omega (M |\nabla u|(x) )^p d \Ha^{\delta}_\infty.
\]
Since  the maximal operator is bounded  in the sense of Choquet with respect to Hausdorff content by Theorem~\ref{thm:M-bounded}, we obtain 
\[
\begin{split}
\int_\Omega |u(x) -u_B|^p d \Ha^{\delta}_\infty
& \le c(n, \delta, p)\biggl(\frac{\beta}{\alpha}\biggr)^{2np} \diam(\Omega)^{p} \int_\Omega |\nabla u(x)|^p d \Ha^{\delta}_\infty. 
\end{split}
\]
Since $\Omega$ is a  bounded John domain, we have $\diam(\Omega)\le 2 \beta$.

If $\delta=n$, then by   \cite{Bojarski1988}, \cite{Martio}, \cite{Hurri} we have
\[
\inf_{b \in \R}\int_\Omega |u(x) -b|^p dx 
\le c(n,p)\beta^p\biggl(\frac{\beta}{\alpha}\biggr)^{2np}   \int_\Omega |\nabla u(x)|^p \, dx
\]
for all $u \in C^1(\Omega)$ with $|\nabla u| \in L^p(\Omega)$. 
Now the claim follows by Lemma~\ref{lem:OV-lemma}.
\end{proof}

\begin{remark}
If $\Omega$ is a  bounded convex domain then the same proof yields  
\[
\inf_{b \in \R} \int_\Omega |u(x)-b|^p \, d \Ha^{\delta}_\infty
\le c(n, \delta, p) \frac{\diam(\Omega)^{np+p}}{|\Omega|^p}\int_\Omega |\nabla u(x)|^p \, d \Ha^{\delta}_\infty
\]
for all $u \in C^1(\Omega)$. Here we have used  \cite[Lemma 7.16]{Gilbarg-Trudinger} instead of the previous estimate for functions in a John domain.
\end{remark}

Next we estimate the Riesz potential by the Hedberg-type pointwise estimate where we have the Choquet integral. The classical version of this pointwise estimate goes back to \cite{Hed72}. We use the fractional maximal function  by following the idea of \cite{Adm75}.

\begin{lemma}\label{lem:Choquet-Hedberg}
Let $\kappa \in [0, 1)$, $\delta \in (0, n]$,  and $p \in (\delta/n, \delta)$. Then there exists a constant $c$ depending only on $n$, $\delta$, $\kappa$, and $p$ such that
\[
\int_\Rn \frac{|f(y)|}{|x-y|^{n-1}} \, dy
\le c 
\bigg(M_\kappa f(x)\bigg)^{\frac{\delta-p}{\delta-\kappa p}}
\bigg(\int_{\Rn} |f(y)|^{p} \, d \Ha^{\delta}_\infty \bigg)^{\frac{1-\kappa}{\delta -\kappa p}}  
\]
for all $x \in \Rn$  and all $f \in L^1_{\loc}(\Rn)$.

\end{lemma}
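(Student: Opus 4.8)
The plan is to run the classical Hedberg truncation argument \cite{Hed72}, in the form used by Adams \cite{Adm75}, and the only place where the Choquet setting really enters is a Sobolev-type bound on a ball. Abbreviate $I(x):=\int_{\Rn}\frac{|f(y)|}{|x-y|^{n-1}}\,dy$. We may assume that $M_\kappa f(x)$ and $\int_{\Rn}|f|^{p}\,d\Ha^{\delta}_\infty$ are both finite and positive; the remaining cases are immediate, since a set of positive Lebesgue measure has positive $\Ha^{\delta}_\infty$-content (via Proposition~\ref{prop:Borel-sets} and the inequality $(\Ha^n_\infty)^{1/n}\le(\Ha^\delta_\infty)^{1/\delta}$) and, for $\delta\le n$, a set of $\Ha^{\delta}_\infty$-content zero is Lebesgue-null.

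The first and crucial step is the local estimate
\[
\int_{B(x,R)}|f(y)|\,dy\le c(n,\delta,p)\,R^{\,n-\delta/p}\Bigl(\int_{\Rn}|f(y)|^{p}\,d\Ha^{\delta}_\infty\Bigr)^{1/p}\qquad(x\in\Rn,\ R>0),
\]
which replaces the Euclidean H\"older bound $\int_{B(x,R)}|f|\,dy\lesssim R^{\,n-n/p}\|f\|_{L^p}$. I would obtain it by applying \eqref{lemma_b} of Lemma~\ref{lem:OV-lemma} with $\Omega=B(x,R)$ to bound $\int_{B(x,R)}|f|\,dy$ by $\tfrac{c_2(n)}{\delta}\bigl(\int_{B(x,R)}|f|^{\delta/n}\,d\Ha^{\delta}_\infty\bigr)^{n/\delta}$, then writing $|f|^{\delta/n}=(|f|^{p})^{\delta/(np)}\cdot 1$ — legitimate because $\delta/(np)\in(0,1)$ thanks to $p>\delta/n$ — and using the Choquet H\"older inequality (C7) with the conjugate exponents $np/\delta$ and $np/(np-\delta)$ to get
\[
\int_{B(x,R)}|f|^{\delta/n}\,d\Ha^{\delta}_\infty\le 2\Bigl(\int_{\Rn}|f|^{p}\,d\Ha^{\delta}_\infty\Bigr)^{\delta/(np)}\bigl(\Ha^{\delta}_\infty(B(x,R))\bigr)^{1-\delta/(np)},
\]
where I use (C3), (C4) and the trivial bound $\Ha^{\delta}_\infty(B(x,R))\le R^{\delta}$. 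Raising to the power $n/\delta$ and simplifying the exponents ($\tfrac{n}{\delta}\cdot\tfrac{\delta}{np}=\tfrac1p$ and $\tfrac{n}{\delta}\cdot\delta(1-\tfrac{\delta}{np})=n-\tfrac{\delta}{p}$) yields the local estimate.

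Granting this, I would decompose $I(x)$ at a free radius $r>0$ into dyadic annuli. Over $B(x,r)=\bigcup_{j\ge0}\bigl(B(x,2^{-j}r)\setminus B(x,2^{-j-1}r)\bigr)$, estimating $|x-y|^{-(n-1)}$ by $(2^{-j-1}r)^{-(n-1)}$ on the $j$-th shell and $\int_{B(x,2^{-j}r)}|f|\,dy$ by $(2^{-j}r)^{\,n-\kappa}M_\kappa f(x)$ (the definition of $M_\kappa$) and summing the geometric series, whose ratio is $2^{\kappa-1}<1$ because $\kappa<1$, I get $\int_{B(x,r)}\frac{|f(y)|}{|x-y|^{n-1}}\,dy\le c(n,\kappa)\,r^{1-\kappa}M_\kappa f(x)$. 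Over $\Rn\setminus B(x,r)=\bigcup_{j\ge0}\bigl(B(x,2^{j+1}r)\setminus B(x,2^{j}r)\bigr)$, estimating $|x-y|^{-(n-1)}$ by $(2^{j}r)^{-(n-1)}$ and invoking the local estimate with $R=2^{j+1}r$, the geometric series has ratio $2^{\,1-\delta/p}$, which is $<1$ precisely because $p<\delta$, so $\int_{\Rn\setminus B(x,r)}\frac{|f(y)|}{|x-y|^{n-1}}\,dy\le c(n,\delta,p)\,r^{\,1-\delta/p}\bigl(\int_{\Rn}|f|^{p}\,d\Ha^{\delta}_\infty\bigr)^{1/p}$. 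Writing $M:=M_\kappa f(x)$ and $A:=\bigl(\int_{\Rn}|f|^{p}\,d\Ha^{\delta}_\infty\bigr)^{1/p}$ I arrive at $I(x)\le c_1r^{\,1-\kappa}M+c_2r^{\,1-\delta/p}A$ for every $r>0$; since $1-\kappa>0>1-\delta/p$, the minimum (up to a constant) is attained by balancing the two terms, that is, by choosing $r$ with $r^{\,\delta/p-\kappa}\sim A/M$ (the exponent $\delta/p-\kappa$ is positive because $\delta/p>1>\kappa$). Substituting this $r$ and using $1-\tfrac{p(1-\kappa)}{\delta-\kappa p}=\tfrac{\delta-p}{\delta-\kappa p}$ gives $I(x)\le c\,(M_\kappa f(x))^{\frac{\delta-p}{\delta-\kappa p}}\bigl(\int_{\Rn}|f|^{p}\,d\Ha^{\delta}_\infty\bigr)^{\frac{1-\kappa}{\delta-\kappa p}}$, which is the assertion.

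I expect the local estimate to be the real obstacle; everything after it is the standard Hedberg splitting and a one-variable optimisation. Its point is that it produces the power $R^{\,n-\delta/p}$ rather than $R^{\,n-n/p}$, and this is exactly what makes the outer dyadic sum converge under the hypothesis $p<\delta$ (in place of the classical $p<n$), while the inner sum uses only $\kappa<1$; the hypothesis $p>\delta/n$ is used both in the proof of the local estimate (to apply \eqref{lemma_b} together with (C7)) and in ensuring $\delta/p-\kappa>0$ at the optimisation.
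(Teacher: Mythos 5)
Your argument is correct, and its skeleton is exactly the paper's: Hedberg's truncation at a free radius $r$, the inner part controlled by $r^{1-\kappa}M_\kappa f(x)$ via dyadic shells (ratio $2^{\kappa-1}<1$ from $\kappa<1$), the outer part by $r^{1-\delta/p}\bigl(\int|f|^p\,d\Ha^\delta_\infty\bigr)^{1/p}$, and the optimisation $r=(M_\kappa f(x)/\|f\|)^{-p/(\delta-\kappa p)}$. The one place you diverge is the tail estimate. The paper applies H\"older's inequality for the Lebesgue integral over $\Rn\setminus B(x,r)$ with exponents $np/\delta$ and $np/(np-\delta)$, converts the resulting $L^{np/\delta}$ norm of $f$ into the Choquet norm via \eqref{lemma_b} of Lemma~\ref{lem:OV-lemma}, and computes the radial integral $\int_{\Rn\setminus B(x,r)}|x-y|^{\frac{np(1-n)}{np-\delta}}\,dy$ explicitly. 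You instead first prove the local bound $\int_{B(x,R)}|f|\,dy\le c\,R^{n-\delta/p}\bigl(\int_{\Rn}|f|^p\,d\Ha^\delta_\infty\bigr)^{1/p}$ — using \eqref{lemma_b}, the Choquet H\"older inequality (C7) with the same conjugate pair $np/\delta$, $np/(np-\delta)$, and $\Ha^\delta_\infty(B(x,R))\le R^\delta$ — and then sum dyadic annuli with ratio $2^{1-\delta/p}<1$. Both routes consume the hypotheses in the same way ($p>\delta/n$ to make $np/\delta>1$, $p<\delta$ to make the tail converge), and your exponent bookkeeping checks out throughout; your version isolates a reusable Choquet--Sobolev estimate on balls and avoids the explicit radial integral, at the cost of a second geometric series. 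Either way the lemma is proved.
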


\begin{proof}
Let $A_k =\{y \in \Rn : 2^{-k} r \le |x-y|< 2^{-k+1}r\}$. We estimate
\[
\begin{split}
\int_{B(x, r)} \frac{|f(y)|}{|x-y|^{n-1}} \, dy 
&= \sum_{k=1}^\infty \int_{A_k} \frac{|f(y)|}{|x-y|^{n-1}} \, dy\\
&\le  \sum_{k=1}^\infty (2^{-k}r)^{1-n} \int_{B(x, 2^{-k+1}r)} |f(y)| \, dy\\
&\le   \frac{2^{\kappa -1 +n}}{1-2^{\kappa -1}} r^{1-\kappa} M_\kappa f(x), 
\end{split}
\]
where in the last step  the sum of a geometric series is used.

Outside the ball  $B(x,r)$ we use Hölder's inequality and Lemma~\ref{lem:OV-lemma} to obtain 
\[
\begin{split}
&\int_{\Rn\setminus B(x, r)} \frac{|f(y)|}{|x-y|^{n-1}} \, dy
\le \Big(\int_{\Rn\setminus B(x, r)} |f(y)|^{\frac{np}{\delta}}\, dy \Big)^{\frac{\delta}{np}} 
\Big(\int_{\Rn\setminus B(x, r)} |x-y|^{\frac{np(1-n)}{np-\delta}} \, dy\Big)^{\frac{np-\delta}{np}}\\
&\le  c(n, \delta, p)\Big(\int_{\Rn\setminus B(x, r)} |f(y)|^{p} \, d \Ha^{\delta}_\infty \Big)^{\frac{1}{p}} 
\Big(\int_{\Rn\setminus B(x, r)} |x-y|^{\frac{np(1-n)}{np-\delta}} \, dy\Big)^{\frac{np-\delta}{np}}.
\end{split}
\]
The last term on the right hand side  is
\[
\int_{\Rn\setminus B(x, r)} |x-y|^{\frac{np(1-n)}{np-\delta}} \, dy
= \frac{\omega_{n-1}}{(n-1)\frac{np(n-1)}{np-\delta} -n } r^{n-\frac{np(n-1)}{np-\delta}},
\]
where $\omega_{n-1}$ is the $n-1$-dimensional Hausdorff measure of the sphere, \cite[Lemma]{Hed72}.
Note that $n-\frac{np(n-1)}{np-\delta}<0$,  since  $p \in (\delta/n, \delta)$.
Thus we have
\[
\int_\Rn \frac{|f(y)|}{|x-y|^{n-1}} \, dy \le c  \big( r^{1-\kappa} M_\kappa f(x) +  \|f\| r^{1-\frac{\delta}{p}} \big),
\]
where $\|f\| := \Big(\int_{\Rn\setminus B(x, r)} |f(y)|^{p} \, d \Ha^{\delta}_\infty \Big)^{\frac{1}{p}}$.
By choosing 
\[
r= \Big( \frac{M_\kappa f(x)}{\|f\|} \Big)^{-\frac{p}{\delta - \kappa p}}
\]
we obtain
\[
\int_\Rn \frac{|f(y)|}{|x-y|^{n-1}} \, dy \le c (M_\kappa f(x))^{1-\frac{p(1-\kappa)}{\delta- \kappa p}} \|f\|^{\frac{p(1-\kappa)}{\delta-\kappa p}}
\]
 for all $x \in \Rn$.
 This inequality yields the claim.
\end{proof}

The previous lemma gives our main result. Note that if $\kappa >0$ then the dimension of the Hausdorff content is lower on the left and side than on the right hand side.

\begin{theorem}\label{thm:main-fractional}
Let $\Omega$ be a bounded  $(\alpha, \beta)$-John domain in $\Rn$.
Suppose that $\delta \in(0, n]$, $\kappa \in [0, 1)$, and $p \in (\delta/n, \delta)$. Then there exists a constant $c$ depending only on $n$, $\delta$, $\kappa$, $p$, 
 and John constants $\alpha$ and $\beta$ such that  
\begin{equation}\label{maineq}
 \inf_{b \in \R}  \Big(\int_\Omega |u(x) -b|^{\frac{p(\delta- \kappa p) }{\delta-p}} d \Ha^{\delta -\kappa p}_\infty \Big)^{\frac{\delta-p}{p(\delta- \kappa p)}}
\le c \Big(\int_{\Omega} |\nabla u(x)|^{p} \, d \Ha^{\delta}_\infty \Big)^{\frac{1}{p}}
\end{equation}
for all $u \in C^1(\Omega)$.
\end{theorem}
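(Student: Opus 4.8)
The plan is to run a Hedberg-type argument entirely within the Choquet calculus, so that all the analytic content is already packaged in Lemma~\ref{lem:Choquet-Hedberg} and Theorem~\ref{thm:fractional-maximal-function}. First I would fix $u\in C^1(\Omega)$ and, as in the proof of Theorem~\ref{thm:Poincare}, assume the right-hand side of \eqref{maineq} is finite; then inequality \eqref{lemma_b} of Lemma~\ref{lem:OV-lemma} applied to $|\nabla u|^{pn/\delta}$, together with $pn/\delta>1$ and the boundedness of $\Omega$, gives $|\nabla u|\in L^1(\Omega)$. Extending $\nabla u$ by zero outside $\Omega$, the function $g:=|\nabla u|\chi_\Omega$ lies in $L^1_{\loc}(\Rn)$, so its Riesz potential and fractional maximal function $M_\kappa g$ are well defined. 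I would choose $b=u_B$ with $B$ the ball appearing in \eqref{pointwise}, so that $\inf_{b\in\R}$ is bounded by the value at $b=u_B$.

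Next, for each $x\in\Omega$ the John-domain estimate \eqref{pointwise} reads $|u(x)-u_B|\le c(n,\alpha,\beta)\int_{\Rn}g(y)|x-y|^{1-n}\,dy$, and feeding this into Lemma~\ref{lem:Choquet-Hedberg} (whose hypotheses $\kappa\in[0,1)$, $\delta\in(0,n]$, $p\in(\delta/n,\delta)$ are exactly those of the theorem) yields the pointwise bound
\[
|u(x)-u_B|\le c\,\bigl(M_\kappa g(x)\bigr)^{\frac{\delta-p}{\delta-\kappa p}}\Bigl(\int_{\Rn}g(y)^p\,d\Ha^\delta_\infty\Bigr)^{\frac{1-\kappa}{\delta-\kappa p}}.
\]
I would then set $q:=p(\delta-\kappa p)/(\delta-p)$; note $\delta-\kappa p>0$ since $\kappa<1$ and $p<\delta$ give $\kappa p<\delta$, and $\delta-\kappa p\le\delta\le n$, so $\Ha^{\delta-\kappa p}_\infty$ is a legitimate content. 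Raising the previous display to the power $q$, the exponent on $M_\kappa g$ collapses to exactly $p$, so that
\[
|u(x)-u_B|^q\le c^q\,\bigl(M_\kappa g(x)\bigr)^p\Bigl(\int_{\Rn}g^p\,d\Ha^\delta_\infty\Bigr)^{\frac{(1-\kappa)q}{\delta-\kappa p}}.
\]

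Finally I would integrate over $\Omega$ with respect to $\Ha^{\delta-\kappa p}_\infty$. The bracketed term is a constant, so properties (C1) and (C5) of the Choquet integral, together with (C4) to pass to $\Rn$, give
\[
\int_\Omega |u-u_B|^q\,d\Ha^{\delta-\kappa p}_\infty\le c\Bigl(\int_{\Rn}g^p\,d\Ha^\delta_\infty\Bigr)^{\frac{(1-\kappa)q}{\delta-\kappa p}}\int_{\Rn}\bigl(M_\kappa g\bigr)^p\,d\Ha^{\delta-\kappa p}_\infty.
\]
Since $p<\delta$ and $\kappa\in[0,1)$ force $p<\delta/\kappa$ (with $\delta/0:=\infty$), while $\kappa\in[0,n)$, Theorem~\ref{thm:fractional-maximal-function} bounds the last factor by $c\int_{\Rn}g^p\,d\Ha^\delta_\infty=c\int_\Omega|\nabla u|^p\,d\Ha^\delta_\infty$. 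Collecting the powers and using the elementary identity $\frac{(1-\kappa)q}{\delta-\kappa p}+1=\frac{q}{p}$ (equivalently $\frac{1-\kappa}{\delta-\kappa p}+\frac{\delta-p}{p(\delta-\kappa p)}=\frac1p$) gives $\int_\Omega|u-u_B|^q\,d\Ha^{\delta-\kappa p}_\infty\le c\bigl(\int_\Omega|\nabla u|^p\,d\Ha^\delta_\infty\bigr)^{q/p}$, and taking $q$-th roots yields \eqref{maineq} since $b=u_B$ is admissible in the infimum. There is no deep new step here: the only things to watch are that the admissibility range $p\in(\delta/n,\delta/\kappa)$ for Theorem~\ref{thm:fractional-maximal-function} indeed follows from the hypotheses, that the exponent arithmetic closes up (it does, by the displayed identity), and that because the Choquet integral is nonlinear every constant is pulled in or out using only (C1), (C4), (C5). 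When $\kappa=0$ this is Corollary~\ref{cor:SP} and Theorem~\ref{thm:fractional-maximal-function} reduces to Theorem~\ref{thm:M-bounded}.
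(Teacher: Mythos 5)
Your proposal is correct and follows essentially the same route as the paper: the John-domain Riesz potential estimate \eqref{pointwise}, the Choquet--Hedberg pointwise bound of Lemma~\ref{lem:Choquet-Hedberg} applied to $|\nabla u|$ extended by zero, integration against $\Ha^{\delta-\kappa p}_\infty$ using (C1) and (C5), and Adams's fractional maximal bound (Theorem~\ref{thm:fractional-maximal-function}), with the same exponent bookkeeping. The only differences are notational (introducing $g$ and $q$) and that you verify the admissibility range $p<\delta/\kappa$ and the exponent identity slightly more explicitly than the paper does.
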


\begin{proof}
Suppose  that $0<\delta<n$ and
$u \in C^1(\Omega)$. We may assume that the right hand side   of inequality \eqref{maineq} is finite.
As in the proof of Theorem~\ref{thm:Poincare} we have $|\nabla u| \in L^1(\Omega)$.
By  \cite[Theorem]{R}, \cite{Bojarski1988}, \cite{Martio},  and \cite{Hurri}  for an $(\alpha ,\beta)$-John domain the pointwise estimate
\[
|u(x) -u_B| \le c(n, \alpha, \beta) \int_\Omega \frac{|\nabla u(y)|}{|x-y|^{n-1}} \, dy
\]
holds 
for every $x \in \Omega$.
 Here $B=B(x_0,c(n)\alpha^2/\beta)$ and $c(n,\alpha ,\beta )=c(n)(\alpha /\beta)^{2n}$ by \cite{Hurri}.
Next we apply Lemma~\ref{lem:Choquet-Hedberg} with the understanding that $|\nabla u|$ is  zero outside $\Omega$. We obtain
\[
\begin{split}
|u(x) -u_B|^{\frac{p(\delta- \kappa p) }{\delta-p}} &\le c \bigg( \int_\Omega \frac{|\nabla u(y)|}{|x-y|^{n-1}} \, dy \bigg)^{\frac{p(\delta- \kappa p) }{\delta-p}}\\
&\le c \Big(\int_{\Omega} |\nabla u(y)|^{p} \, d \Ha^{\delta}_\infty \Big)^{\frac{p(1-\kappa)}{\delta -p}}  (M_\kappa |\nabla u| (x))^p 
\end{split}
\]
for every $x \in \Omega$. Here the constants depends on $n$, $\delta$, $\kappa$, $p$,  $\alpha$, and  $\beta$. 
By integrating with respect to $\Ha^{\delta-\kappa p}_\infty$ and using the properties (C5) and (C1) of the Choquet integral we obtain
\[
\int_\Omega |u(x) -u_B|^{\frac{p(\delta- \kappa p) }{\delta-p}} d \Ha^{\delta-\kappa p}_\infty
\le  c \Big(\int_{\Omega} |\nabla u(y)|^{p} \, d \Ha^{\delta}_\infty \Big)^{\frac{p(1-\kappa)}{\delta -p}} \int_\Omega (M_\kappa |\nabla u| )^p d \Ha^{\delta-\kappa p}_\infty.
\]
Adams's result for boundedness of the fractional Hardy-Littlewood maximal operator,
Theorem~\ref{thm:fractional-maximal-function} implies
\[
\begin{split}
\int_\Omega |u(x) -u_B|^{\frac{p(\delta- \kappa p) }{\delta-p}} d \Ha^{d}_\infty
& \le c \Big(\int_{\Omega} |\nabla u(y)|^{p} \, d \Ha^{\delta}_\infty \Big)^{\frac{p(1-\kappa)}{\delta -p}} \int_\Omega |\nabla u(x)|^p d \Ha^{\delta}_\infty\\
&=  c \Big(\int_{\Omega} |\nabla u(x)|^{p} \, d \Ha^{\delta}_\infty \Big)^{\frac{\delta- p \kappa}{\delta -p}}. 
\end{split}
\]
Hence the claim follows by raising both sides of the previous inequality  to the power $\frac{\delta-p}{p(\delta- \kappa p) }$.
\end{proof}

The $(\delta p/(\delta -p), p)$-Poincar\'e-Sobolev inequality 
\begin{equation*}
\inf_{b \in \R}\Big(\int_\Omega |u(x) -b|^{\frac{\delta p}{\delta -p}} d \Ha^{\delta}_\infty \Big)^{\frac{\delta -p}{\delta p}}
\le c    \Big(\int_\Omega |\nabla u(x)|^p d \Ha^{\delta}_\infty\Big)^{\frac{1}{p}}
\end{equation*}
in Corollary~\ref{cor:SP} 
follows  now from Theorem~\ref{thm:main-fractional} when we choose $\kappa =0$.
When $\delta =n$ we recover the classical Sobolev inequality.

\bigskip

Choosing $\delta =n$ and $\kappa =1/p$ in Theorem \ref{thm:main-fractional} gives the following corollary.

\begin{corollary}\label{Corollary_n-1}
Let $\Omega$ be a bounded  $(\alpha, \beta)$-John domain in $\R^n$.
Suppose that  $p \in (1, n)$. Then there exists a constant $c$ depending only on $n$, $p$, 
 and John constants $\alpha$ and $\beta$ such that  
\begin{equation}\label{eq_n-1}
 \inf_{b \in \R}  \Big(\int_\Omega |u(x) -b|^{\frac{p(n- 1) }{n-p}} d \Ha^{n-1}_\infty \Big)^{\frac{n-p}{p(n- 1)}}
\le c \Big(\int_{\Omega} |\nabla u(x)|^{p} \, dx\Big)^{\frac{1}{p}}
\end{equation}
for all $u \in C^1(\Omega)$.
\end{corollary}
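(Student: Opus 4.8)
The plan is to read off Corollary~\ref{Corollary_n-1} from Theorem~\ref{thm:main-fractional} by the specialisation $\delta = n$, $\kappa = 1/p$, and then to convert the Choquet integral against $\Ha^n_\infty$ that appears on the right-hand side of \eqref{maineq} into an ordinary Lebesgue integral. First I would check that the parameters are admissible: Theorem~\ref{thm:main-fractional} requires $\delta \in (0,n]$, $\kappa \in [0,1)$ and $p \in (\delta/n,\delta)$; with $\delta = n$ the condition on $p$ becomes $p \in (1,n)$, which is exactly the hypothesis of the corollary, and since $p>1$ we have $\kappa = 1/p \in (0,1)$. Next I would compute the quantities occurring in \eqref{maineq} for these values: $\delta - \kappa p = n-1$, so the Hausdorff content on the left-hand side is $\Ha^{n-1}_\infty$, and the integrability exponent is
\[
\frac{p(\delta - \kappa p)}{\delta - p} = \frac{p(n-1)}{n-p}.
\]
Hence the left-hand side of \eqref{maineq} coincides verbatim with the left-hand side of \eqref{eq_n-1}.

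It then remains to treat the right-hand side of \eqref{maineq}, which for $\delta = n$ reads $c\big(\int_\Omega |\nabla u(x)|^{p}\, d\Ha^{n}_\infty\big)^{1/p}$. Applying Lemma~\ref{lem:OV-lemma}, specifically the comparison \eqref{lemma_a} with the choice $f = |\nabla u|^{p}$, gives $\int_\Omega |\nabla u(x)|^{p}\, d\Ha^{n}_\infty \le c_1(n)\int_\Omega |\nabla u(x)|^{p}\, dx$; raising to the power $1/p$ replaces the right-hand side of \eqref{maineq} by $c(n,p,\alpha,\beta)\big(\int_\Omega |\nabla u(x)|^{p}\, dx\big)^{1/p}$. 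Combining this with the identification of the left-hand sides yields \eqref{eq_n-1} with a constant depending only on $n$, $p$, $\alpha$ and $\beta$, and passing to the infimum over $b \in \R$ is harmless.

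There is no genuine obstacle here; the only point that deserves a word is that the proof of Theorem~\ref{thm:main-fractional} is written out under the assumption $0<\delta<n$, whereas we are using it at $\delta = n$. That proof, however, goes through unchanged in this endpoint case: the Hedberg-type bound of Lemma~\ref{lem:Choquet-Hedberg} is valid whenever $\kappa \in [0,1)$ and $p \in (\delta/n,\delta)$, which for $\delta = n$ is $p\in(1,n)$; Adams's maximal-function estimate of Theorem~\ref{thm:fractional-maximal-function} is valid for $p \in (\delta/n, \delta/\kappa)$, and with $\delta = n$, $\kappa = 1/p$ one has $\delta/\kappa = np > n > p$; and Lemma~\ref{lem:OV-lemma} at $\delta = n$ simply reduces to \eqref{lemma_a}. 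Thus the corollary is exactly the $\delta = n$, $\kappa = 1/p$ instance of the main theorem, combined with the elementary comparison between the $n$-dimensional Hausdorff content and Lebesgue measure.
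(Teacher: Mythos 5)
Your proposal is correct and is essentially the paper's own argument: the paper derives the corollary precisely by setting $\delta=n$ and $\kappa=1/p$ in Theorem~\ref{thm:main-fractional}, with the passage from $\int_\Omega|\nabla u|^p\,d\Ha^n_\infty$ to $\int_\Omega|\nabla u|^p\,dx$ supplied by the comparison \eqref{lemma_a} of Lemma~\ref{lem:OV-lemma}, exactly as you do. Your additional remark that the proof of Theorem~\ref{thm:main-fractional} (written out for $0<\delta<n$) still goes through at the endpoint $\delta=n$ is a worthwhile sanity check, but it introduces nothing beyond the paper's route.
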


\begin{remark}
We point out that  the proofs of 
Theorem~\ref{thm:Poincare} and Theorem \ref{thm:main-fractional}  
give stronger inequalities than \eqref{poincare-eq} and
\eqref{maineq}, respectively.
If  $\Omega$ is a bounded  $(\alpha, \beta)$-John domain in $\Rn$,
$\delta \in(0, n]$, $\kappa \in [0, 1)$, and $p \in (\delta/n, \delta)$, then there exist constants $c_1=c_1(\alpha, \beta, \delta, n, p)$ 
and $c_2=c_2(\alpha, \beta, \delta , \kappa, n, p)$ 
such that the inequalities
\[
\int_\Omega |u(x)-u_B|^p \, d \Ha^{\delta}_\infty
\le c(n,p,\delta )\beta^p\biggl(\frac{\beta}{\alpha}\biggr)^{2np} \int_\Omega |\nabla u(x)|^p \, d \Ha^{\delta}_\infty
\]
and
\[
\Big(\int_\Omega |u(x) -u_B|^{\frac{p(\delta- \kappa p) }{\delta-p}} d \Ha^{\delta -\kappa p}_\infty \Big)^{\frac{\delta-p}{p(\delta- \kappa p)}}
\le c \Big(\int_{\Omega} |\nabla u(x)|^{p} \, d \Ha^{\delta}_\infty \Big)^{\frac{1}{p}}
\]
are valid for all $u\in C^1(\Omega)$.
We recall that $B=B(x_0,c(n)\alpha^2/\beta)$ and the integral average has been calculated  with respect to  the Lebesgue measure. 
\end{remark}

\begin{remark}
Let $\Omega$ be a bounded  $(\alpha, \beta)$-John domain in $\R^n$.
Suppose that  $p \in (1, n)$. 
Choosing $\vert f(x)\vert=\vert u(x)-u_{B}\vert^{\frac{np}{n-p}}$
 in  Lemma \ref{lem:OV-lemma}
gives that there exists a constant $c_1$ such that
\begin{equation*}
\Big(\int_\Omega |u(x) -u_{B}|^{\frac{np }{n-p}} dx \Big)^{\frac{n-p}{pn}}\\
\le c_1
\Big(\int_\Omega |u(x) -u_{B}|^{\frac{p(n- 1) }{n-p}} d \Ha^{n-1}_\infty \Big)^{\frac{n-p}{p(n- 1)}}\,.
\end{equation*}
Corollary \ref{Corollary_n-1} gives that there exists a constant $c_2$ such that we have 
\[
\begin{split}
&\Big(\int_\Omega |u(x) -u_{B}|^{\frac{np }{n-p}} dx \Big)^{\frac{n-p}{pn}}\\
&\le c_1
\Big(\int_\Omega |u(x) -u_{B}|^{\frac{p(n- 1) }{n-p}} d \Ha^{n-1}_\infty \Big)^{\frac{n-p}{p(n- 1)}}\\
&\le c_2
\Big(\int_{\Omega} |\nabla u(x)|^{p} \, dx\Big)^{\frac{1}{p}}\,.
\end{split}
\]
This shows some of the benefits which come from using Choquet integrals in Poincar\'e-Sobolev inequalities.
\end{remark}

\begin{remark}
Note that
by Lemma \ref{lem:Choquet-Hedberg} and Theorem \ref{thm:fractional-maximal-function}
 the Riesz potential $I_1f(x) := \int_\Rn \frac{f(y)}{|x-y|^{n-1}} \, dy$ is bounded  with respect to  Hausdorff content.
If  $0<\delta\le n$, $\kappa \in [0, 1)$, and $p \in (\delta/n, \delta)$, then
\[
\Big( \int_\Rn ( I_1(f(x)))^{\frac{p(\delta- \kappa p)}{\delta-p}}d \Ha^{\delta- \kappa p}_\infty
\Big)^{\frac{\delta-p}{p(\delta- \kappa p)}}
\le c(n, \delta, \kappa)  \Big(\int_{\Rn} |f(x)|^{p} \, d \Ha^{\delta}_\infty \Big)^{\frac{1}{p}}
\]
for all $f \in L^1_{\loc}(\Rn)$.
\end{remark}

Next we show that the exponent $\frac{p(\delta-\kappa p) }{\delta -p}$ in Theorem ~\ref{thm:main-fractional} is the best possible  exponent in this setting. This example is based on the example,  \cite[Example 4.41, p.~109]{AdaF03}.

\begin{example}\label{AF}
Let $\Omega:= B^n(0,1)\setminus\{0\}$,
$0<\delta\le n$, $\kappa \in [0, 1)$, and  $p \in (\delta/n, \delta)$.  Let us define $v(x) := |x|^\mu$, where $\mu<0$ is chosen later. 
Then $v \in C^{\infty}(\Omega)$. 
We show that,
if  $q>\frac{p(\delta- \kappa p) }{\delta -p}$, then there exists
$\mu$  such that  $\int_\Omega |v(x)-a|^{q} d \Ha^{\delta- \kappa p}_\infty = \infty$ for any $a \in \R$ and
 at the same time $\int_\Omega |\nabla v(x)|^p d \Ha^{\delta}_\infty<\infty$.

Let $a \in \R$. For the function $v$ itself we use Lemma~\ref{lem:OV-lemma} to obtain
\[
\begin{split}
c(n, \delta, \kappa, p) \Big( \int_\Omega |v(x)-a|^{q} \, d \Ha^{\delta- \kappa p}_\infty \Big)^{\frac{n}{\delta- \kappa p}}
&\ge \int_\Omega |v(x)-a|^{\frac{qn}{\delta- \kappa p}} \, dx \\
&\ge \int_{B(0, r)} |\tfrac12 v(x)|^{\frac{qn}{\delta- \kappa p}} \, dx\\
&=c(n, \kappa, p, \delta, q) \int_0^r \rho^{\frac{\mu qn}{\delta- \kappa p} + n-1} \, d \rho,
\end{split}
\]
for some $r>0$.
The last integral is infinite whenever $\frac{\mu qn}{\delta- \kappa p} + n-1 \le -1$, that is  if 
$\mu\le  - \frac{\delta- \kappa p}{q}$.  

For the gradient we obtain $|\nabla v(x)| = |\mu| |x|^{\mu -1}$. Thus, $|\nabla v(x)|^p >t$ provided that $|x|< c t^{\frac{1}{p(\mu -1)}}$.
 By using the inequality  $\Ha^{\delta}_\infty(B(0, r)) \le r^\delta$, we obtain
\[
\begin{split}
\int_\Omega |\nabla v(x)|^p \, d \Ha^{\delta}_\infty &= \int_0^\infty \Ha^{\delta}_\infty(\{|\nabla v(x)|^p>t\}) \, dt\\
&\le \Ha^{\delta}_\infty(B(0, 1)) + \int_1^\infty \Ha^{\delta}_\infty(B(0, c t^{\frac{1}{p(\mu -1)}}) \, dt\\
&\le \Ha^{\delta}_\infty(B(0, 1)) + c \int_1^\infty  t^{\frac{\delta}{p(\mu -1)}}\, dt.
\end{split}
\]
The last integral is finite provided that $\frac{\delta}{p(\mu -1)}<-1$ i.e. if $\mu> 1- \frac{\delta}{p}$. Since $q>\frac{p(\delta-\kappa p) }{\delta -p}$, we have
$1- \frac{\delta}{p} < - \frac{\delta- \kappa p}{q}$. Thus we may choose  the parameter $\mu$ such that  $1- \frac{\delta}{p} < \mu \le- \frac{\delta- \kappa p}{q}$.

\end{example}

\begin{remark}
If $\Omega$ is an unbounded domain such that $\Omega =\cup_{i=0}^{\infty}\Omega_i$ where
$\Omega_i\subset \Omega_{i+1}$ and $\Omega_i$ is an $(\alpha_i,\beta_i)$-John domain
 for some $0<\alpha_i\le \beta_i < \infty$, $i=0, 1, \dots\,.$
If $\beta_i/\alpha_i\le c$ for all $i$, then the $(np/n-p),p)$-Poincar\'e-Sobolev inequality holds for all
functions $u\in L^1_p(\Omega)$, \cite[Theorem 4.1]{H-S}. This result corresponds to the case $\delta =n$.
\end{remark}

%%%%%%%%%%%%%%%%%%%%%%%%%%%%%%%%%%%%%%%%%%%%%%%
%%%%%%%%%%%%%%%%%%%%%%%%%%%%%%%%%%%%%%%%%%%%%%%
%%%%%%%%%%%%%%%%%%%%%%%%%%%%%%%%%%%%%%%%%%%%%%%

\section{Inequalities for $C^1_0$-functions}

The Poincar\'e inequality and Poincar\'e-Sobolev inequality for $C^1_0$-functions follow in a similar  fashion as for $C^1$-functions,
respectively.
 The  main difference is to use for functions  $u \in C^1_0(\Omega)$ the estimate
\begin{equation}\label{pointwise2}
|u(x)| \le c(n) \int_\Omega \frac{|\nabla u(y)|}{|x-y|^{n-1}}\,dy \quad\text{ for all } x\in \Rn,
\end{equation}
  \cite[1.1.10 Theorem 2]{Maz85},
instead of  the corresponding  inequality \eqref{pointwise}  for $C^1(\Omega )$-functions  defined on  a John domain.  Inequality  \eqref{pointwise2}  yields the following theorem, where 
the part (a) holds only in a bounded domain while  the part (b) can also be  applied for unbounded domains.
 In fact, if the domain is bounded in the part (b), then  H\"older's  inequality implies the part  (a) too.

\begin{theorem}\label{thm:Poincare_0}
Let $\delta  \in (0, n]$. 
\begin{itemize} 
\item[(a)]
If  $\Omega $ is  a bounded domain  in $\Rn$ 
and  $p \in (\delta/n, \infty)$, then there exists a constant $c$ depending only on $n$, $\delta$, and $p$
 such that  
\[
 \int_\Omega |u(x)|^p \, d \Ha^{\delta}_\infty
\le c \diam(\Omega)^{p} \int_\Omega |\nabla u(x)|^p \, d \Ha^{\delta}_\infty
\]
for all $u \in C^1_0(\Omega)$.
\item[(b)]
If  $\Omega$ is  a domain in $\Rn$, $\kappa \in [0, 1)$, and $p \in (\delta/n, \delta)$, then there exists a constant $c$ depending only on $n$, $\delta$,
$\kappa$, and $p$ such that  
\[
\Big(\int_\Omega |u(x)|^{\frac{p(\delta- \kappa p) }{\delta-p}} d \Ha^{\delta -\kappa p}_\infty \Big)^{\frac{\delta-p}{p(\delta- \kappa p)}}
\le c \Big(\int_{\Omega} |\nabla u(x)|^{p} \, d \Ha^{\delta}_\infty \Big)^{\frac{1}{p}}
\] 
for all $u \in C^1_0(\Omega)$.
\end{itemize}
\end{theorem}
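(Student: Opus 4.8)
The plan is to transcribe the proofs of Theorem~\ref{thm:Poincare} and Theorem~\ref{thm:main-fractional}, replacing the John‑domain pointwise estimate \eqref{pointwise} (in which the average $u_B$ is subtracted) by the pointwise estimate \eqref{pointwise2}, which is valid for compactly supported functions on an arbitrary domain, so that no constant $b$ need be subtracted and no geometric hypothesis beyond the stated ones is used. First I would record that for $u\in C^1_0(\Omega)$ the gradient $\nabla u$ is continuous with compact support, hence, after extension by zero, $|\nabla u|\in L^1(\Rn)\subset L^1_{\loc}(\Rn)$; thus the Riesz potential $\int_\Omega |\nabla u(y)|\,|x-y|^{1-n}\,dy$, the maximal function $M|\nabla u|$, and the fractional maximal function $M_\kappa|\nabla u|$ are all well defined, and $|\nabla u|$ lies in the class required by Theorem~\ref{thm:M-bounded}, Lemma~\ref{lem:Choquet-Hedberg}, and Theorem~\ref{thm:fractional-maximal-function}.

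For part~(a) I would argue as follows. Since $\Omega$ is bounded, \eqref{pointwise2} together with \cite[Lemma 2.8.3]{Zie89} (equivalently, the dyadic‑annuli splitting of the Riesz potential used already in the proof of Lemma~\ref{lem:Choquet-Hedberg}) gives
\[
|u(x)|\le c(n)\int_\Omega\frac{|\nabla u(y)|}{|x-y|^{n-1}}\,dy\le c(n,p)\,\diam(\Omega)\,M|\nabla u|(x)
\]
for every $x\in\Omega$. Raising to the power $p$, integrating with respect to $\Ha^\delta_\infty$, using properties (C1) and (C5) of the Choquet integral, and then invoking the Adams--Orobitg--Verdera bound of Theorem~\ref{thm:M-bounded} (which is exactly where $p>\delta/n$ is used) yields
\[
\int_\Omega |u(x)|^p\,d\Ha^\delta_\infty\le c(n,\delta,p)\,\diam(\Omega)^p\int_\Omega |\nabla u(x)|^p\,d\Ha^\delta_\infty,
\]
which is part~(a).

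For part~(b) I would start again from \eqref{pointwise2} and then apply the Choquet--Hedberg estimate, Lemma~\ref{lem:Choquet-Hedberg}, to $f=|\nabla u|$ (understood to vanish outside $\Omega$); raising the resulting inequality to the power $\tfrac{p(\delta-\kappa p)}{\delta-p}$ gives, for every $x\in\Omega$,
\[
|u(x)|^{\frac{p(\delta-\kappa p)}{\delta-p}}\le c\,\Big(\int_\Omega |\nabla u(y)|^p\,d\Ha^\delta_\infty\Big)^{\frac{p(1-\kappa)}{\delta-p}}\big(M_\kappa|\nabla u|(x)\big)^p .
\]
Integrating with respect to $\Ha^{\delta-\kappa p}_\infty$, using (C1) and (C5), and then applying Adams's boundedness of the fractional maximal operator, Theorem~\ref{thm:fractional-maximal-function} — applicable because $\delta/n<p<\delta$ places $p$ in the admissible range $(\delta/n,\delta/\kappa)$ while $\kappa<1$ keeps $\delta-\kappa p>0$ — produces
\[
\int_\Omega |u(x)|^{\frac{p(\delta-\kappa p)}{\delta-p}}\,d\Ha^{\delta-\kappa p}_\infty\le c\,\Big(\int_\Omega |\nabla u(x)|^p\,d\Ha^\delta_\infty\Big)^{\frac{\delta-\kappa p}{\delta-p}},
\]
and raising both sides to the power $\tfrac{\delta-p}{p(\delta-\kappa p)}$ gives part~(b). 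When $\Omega$ is bounded one can alternatively recover part~(a), in the range $p<\delta$, from part~(b) with $\kappa=0$ by Hölder's inequality (C7) together with (C3) and the finiteness of $\Ha^\delta_\infty(\Omega)$.

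I do not expect a genuine obstacle here: the argument is a line‑by‑line adaptation of the $C^1$ case, with \eqref{pointwise2} doing the work of \eqref{pointwise}. The only points needing care are (i) that \eqref{pointwise2} holds for every $x\in\Rn$ with no hypothesis on $\Omega$ beyond openness, so that the proof of part~(b) never uses boundedness of $\Omega$ — this is precisely what allows the statement to cover unbounded domains — and (ii) the bookkeeping of exponents, so that they remain in the ranges demanded by Theorem~\ref{thm:M-bounded} and Theorem~\ref{thm:fractional-maximal-function}.
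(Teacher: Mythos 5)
Your proposal is correct and is exactly the argument the paper intends: the paper's own ``proof'' of Theorem~\ref{thm:Poincare_0} consists precisely of the remark that one transcribes the proofs of Theorem~\ref{thm:Poincare} and Theorem~\ref{thm:main-fractional}, replacing the John-domain representation \eqref{pointwise} by Maz'ya's estimate \eqref{pointwise2} for compactly supported functions. The only bookkeeping point worth keeping in mind (and which the transcription of Theorem~\ref{thm:Poincare} already handles) is that for $\delta=n$ one invokes the classical maximal theorem together with Lemma~\ref{lem:OV-lemma} rather than Theorem~\ref{thm:M-bounded}, which is stated for $\delta\in(0,n)$.
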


\begin{remark}
Let $\kappa =0$ and $\delta = n-1$.
 Both limit cases $p=\frac{\delta}{n}$ and $p=\delta$ are excluded from Theorem~\ref{thm:Poincare_0}(b).
\begin{itemize}
\item However, by combining  the inequality of Adams \eqref{equ:Adams} and Lemma~\ref{lem:OV-lemma} we obtain the inequality
\begin{equation*}%\label{equ:Adams-2}
\int_{\Rn}
\vert u(x)\vert\,d\Ha_{\infty}^{n-1}\le c(n, \delta) \Big(\int_\Rn |\nabla u(x)|^{\frac{\delta}n} \, d \Ha^{\delta}_\infty \Big)^{\frac{n}{\delta}}
\end{equation*}
for every $\delta \in (0, n]$ whenever $u\in \C^{\infty}_0(\Rn)$.
Note  that if  $p=\frac{\delta}{n}$ and  $\delta =n-1$, then $\frac{\delta p}{\delta -p}=1$.
Hence, the above inequality can be seen as a limit case  if   $p=\frac{\delta}{n}$ with $\delta= n-1$  for Theorem~\ref{thm:Poincare_0}(b)  where $\kappa =0$.

\item 
Corresponding to the upper limiting case $p=\delta=n-1$,  the authors  of the present paper  showed  in  \cite[Corollary 1.3]{HH-S}:
% that $C^1_0$-functions are  exponentially integrable.
If $\Omega$ is a  bounded $(\alpha, \beta )$-John domain in $\Rn$, then
there exist positive constants $a$ and $b$ such that
\begin{equation*}
\int_{\Omega}\exp \big(a\vert u(x)-u_B\vert^{\frac{n}{n-1}}\big)\,d\Ha_{\infty}^{n-1}\le b
\end{equation*}
 for all $u\in L^1_{n}(\Omega) \cap C^1(\Omega)$ with $\vert\vert\nabla u\vert\vert_{L^{n}(\Omega)}\le 1$. Here $B=B(x_0, c(n)\alpha^2/\beta)$.
Moreover,  
$\vert\vert\nabla u\vert\vert_{L^{n}(\Omega)} \le c(n) \big( \int_\Omega |\nabla u|^{n-1} \,d\Ha_{\infty}^{n-1} \big)^{1/(n-1)}$ by Lemma~\ref{lem:OV-lemma}.
\end{itemize}
\end{remark}

%%%%%%%%%%%%%%%%%%%%%%%%%%%%%%%%%%%%%%%%%%%%%%%%%%%%%%
%%%%%%%%%%%%%%%%%%%%%%%%%%%%%%%%%%%%%%%%%%%%%%%%%%%%%%%%%%%%%%%%%%%%

\bibliographystyle{amsalpha}

\end{document}